\numberwithin{equation}{section}
\theoremstyle{plain}
\newtheorem{theorem}{Theorem}[section]
\newtheorem*{theorem*}{Theorem}
\newtheorem*{lemma*}{Lemma}
\newtheorem{lemma}[theorem]{Lemma}
\theoremstyle{definition}
\theoremstyle{remark}
\newtheorem{remark}[theorem]{Remark}
\newcommand{\ep}{\varepsilon}
\newcommand{\NN}{\mathbb{N}}
\newcommand{\R}{\mathbb{R}}
\newcommand{\US}{\mathbb{S}}
\newcommand{\C}{\mathbb{C}}
\newcommand\supp{\mathop{\rm supp}}
\newcommand\real{\mathop{\rm Re}}
\newcommand\imag{\mathop{\rm Im}}
\newcommand{\dist}{\mathop{\rm dist}}
\newcommand*{\defeq}{\mathrel{\vcenter{\baselineskip0.5ex \lineskiplimit0pt

                     \hbox{\scriptsize.}\hbox{\scriptsize.}}}%
                     =}
\newcommand*{\qefed}{=\mathrel{\vcenter{\baselineskip0.5ex \lineskiplimit0pt

                     \hbox{\scriptsize.}\hbox{\scriptsize.}}}}
\begin{document}
\title[Resolvent bounds for repulsive potentials]{Resolvent bounds for repulsive potentials}

\author{Andr\'es Larra\'in-Hubach}
\address{Department of Mathematics, University of Dayton, Dayton, OH 45469-2316, USA}
\email{alarrainhubach1@udayton.edu}

\author{Yulong Li}
\address{Department of Mathematics, University of Dayton, Dayton, OH 45469-2316, USA}
\email{yli004@udayton.edu}
\author{Jacob Shapiro}
\address{Department of Mathematics, University of Dayton, Dayton, OH 45469-2316, USA}
\email{jshapiro1@udayton.edu}
\author{Joseph Tiller}
\address{Peerless Technologies, Beavercreek, OH 45324-2009, USA}
\email{joseph.tiller@epeerless.us}

\keywords{resolvent estimate, Schr\"odinger operator, repulsive potential, local energy decay}

\maketitle

\begin{abstract}
We prove limiting absorption resolvent bounds for the semiclassical Schr\"odinger operator with a repulsive potential in dimension $n\ge 3$, which may have a singularity at the origin. As an application, we obtain time decay for the weighted energy of the solution to the associated wave equation with a short range repulsive potential and compactly supported initial data. 
\end{abstract}

\section{Introduction and statement of results} \label{introduction}

The goal of this paper is to establish limiting absorption resolvent bounds for the semiclassical Schr\"odinger operator with a repulsive potential in dimensions three and higher. The dimension one case was studied in \cite[Section 2]{chda21}. As an application, we obtain time decay of a weighted energy for the solution to the associated wave equation with a short range repulsive potential and compactly supported initial data. 

We begin with some notation and conventions. Let $\Delta \defeq \sum_{j=1}^n \partial^2_j \le 0$ be the Laplacian on $\mathbb{R}^n$. We use $(r, \theta) = (|x|, x/|x|) \in (0, \infty) \times \US^{n-1}$ for polar coordinates on $\R^n \setminus \{0\}$. Throughout, we equip $\US^{n-1}$ with the standard Borel measure $d\theta$ such that the product measure $r^{n-1}dr \times d\theta$ gives Lebesgue measure on $(0,\infty) \times \US^{n-1}$. Put $B(0,r_0) \defeq \{x \in \R^n : |x| < r_0\}$. For a function $u$ defined on a subset of $\R^n$, we write $u(r, \theta) \defeq u(r \theta)$, and denote the radial derivative by $u' \defeq \partial_ru$. If $E \subseteq \R^n$ is a Borel set, $\mathbf{1}_E$ stands for its indicator function.

Our Schr\"odinger operator takes the form 
\begin{equation} \label{P}
P(h) \defeq -h^2 \Delta + V(x) : L^2(\mathbb{R}^n) \to L^2(\mathbb{R}^n),\qquad x \in \R^n,
\end{equation}
where $h > 0$ is the semiclassical parameter. The conditions we place on the Borel measurable potential $V : \R^n \to \R$ are as follows. We suppose $V=V^+-V^-$ with $V^+=\max(V,0)$, $V^-=-\min(V,0)$, and
\begin{gather}
V^-\in L^{\infty}(\mathbb{R}^n), \label{nonneg} \\  
r^{\rho(n)}V(x) \text{ is bounded for $r \le 1$,}  \label{for self-adjointness} \\
V(x) \text{ is bounded for $r \ge 1$,}  \label{for self-adjointness II} \\
\text{for each $\theta \in \US^{n-1}$, $(0, \infty) \ni r \mapsto V(r, \theta) \defeq V(r\theta)$ has bounded variation.} \label{AC loc} 
\end{gather}
Here, $\rho(n) > 0$ depends on the dimension $n$:
\begin{equation} \label{rho}
\rho(n) <  \begin{cases} \frac{3}{2} & n = 3, \\
2 & n \ge 4.
\end{cases}
\end{equation}

Recall that a function $f$ of locally bounded variation on an interval $I \subseteq \R$ has distributional derivative equal to a locally finite signed Borel measure, which we denote by $df$. In particular 
\begin{equation} \label{dist deriv}
\int \varphi df = - \int f \varphi'dx , \qquad \varphi \in C^\infty_0(I),
\end{equation}
where the $dx$ on the right side denotes Lebesgue measure on $I$; $df$ further satisfies,
\begin{equation} \label{ftc}
\int_{(a,b]} df = f^R(b) - f^R(a),
\end{equation}
for any interval $(a,b]$ contained in the interior of $I$, where $f^R(x) \defeq \lim_{\delta \to 0^+} f(x + \delta)$.

 The last condition we impose on $V$ is that there exists $C_V > 0$ so that for all $\theta \in \US^{n-1}$ and every bounded Borel set $E \subseteq (0, \infty)$,  
\begin{equation} \label{V prime cond}
\int_E dV( r ,\theta) \le -C_V\int_E (r +1 )^{-1} {V^+( r ,\theta)} dr.
\end{equation}
We emphasize that since the inequality \eqref{V prime cond} is one sided, each measure $dV(\cdot, \theta)$ is allowed to have negative point masses. Moreover, because only the positive part $V^+$ appears on the right side, the potential is allowed to approach a negative constant as $r \to \infty$. A prototype potential satisfying the above conditions is
\begin{equation*}
V(r, \theta) =g(\theta) \big( \mathbf{1}_{B(0,1)}  r^{-\rho(n)}  - 2^{-1} \mathbf{1}_{\R^n \setminus B(0,1)} (r^{-\delta} -2^{-1})\big)
\end{equation*}
for some $\delta > 0$ and $g\ge 0$ a bounded measurable function on $\US^{n-1}$. Note that for $V \in C^1(\R^n; [0, \infty))$, \eqref{V prime cond} implies each $V(\cdot, \theta)$ is repulsive in sense of classical mechanics, i.e., that $V(r, \theta) > 0$ implies $V'(r, \theta) < 0$. The local bound \eqref{for self-adjointness} allows for mild singularities at the origin, most notably the repulsive Coulomb behavior.  

For a real valued potential $V \in L^p(\R^n) + L^\infty(\R^n)$ with $p \ge 2$, $p > n/2$, $P(h)$ is self-adjoint if one takes the Sobolev space $H^2(\R^n)$ as the domain \cite[Theorem 8]{ne64}. The conditions \eqref{nonneg}, \eqref{for self-adjointness}, and \eqref{for self-adjointness II} imply that $V \in L^p(\R^n) + L^\infty(\R^n)$ for some such $p$. Our main results are the following weighted limiting absorption resolvent bounds for $P(h)$. 

 \begin{theorem} \label{newpreliminary} 
Suppose $n\ge 3$ and $V$ satisfies \eqref{nonneg} through \eqref{AC loc} and \eqref{V prime cond}. Define $P(h)$ by \eqref{P} and equip it with the domain $H^2(\R^n)$. For all $s > 1/2$ and $z=E\pm i\varepsilon$ with $E>0$ fixed, there is a constant $\mathfrak{C}(E,s,\varepsilon,\|V^-\|_{L^\infty})>0$ defined in \eqref{thm1.1aux} such that
\begin{equation}\label{thm1.1}
\|(r + 1)^{-s}(P(h)-z)^{-1}(r + 1)^{-s}\|_{L^2(\mathbb{R}^n)\to L^2(\mathbb{R}^n)}\leq \frac{\mathfrak{C}(E,s,\varepsilon,\|V^-\|_{L^\infty})}{h}.
\end{equation}
 \end{theorem} 
In case that $V^-=0,$ we prove stronger estimates.

\begin{theorem}  \label{nontrap est thm repulsive V}
Suppose $n \ge 3$ and  $V$ satisfies \eqref{nonneg} through \eqref{AC loc} and \eqref{V prime cond} with $V^-=0$. Define $P(h)$ by \eqref{P} and equip it with the domain $H^2(\R^n)$. For all $s, \, s_1, \, s_2 > 1/2$, with $s_1 + s_2 > 2$, there is $C> 0$ such that for all $z \in \C \setminus [0, \infty)$ and $h > 0$,
\begin{gather} 
 \| (r + 1)^{-s} (P(h) - z)^{-1} (r + 1)^{-s}  \|_{L^2(\R^n) \to L^2(\R^n)} \le  \frac{C}{h |z|^{1/2}},  \label{nontrap est repulsive V}\\
  \| (r + 1)^{-s_1} (P(h) - z)^{-1} (r + 1)^{-s_2}  \|_{L^2(\R^n) \to L^2(\R^n)} \le \frac{C}{h^2}. \label{low freq est repulsive V}
\end{gather} 
 \end{theorem}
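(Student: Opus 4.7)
The plan is a positive-commutator (Morawetz-type) argument, performed after a conjugation that converts the $n$-dimensional radial Laplacian into a flat second derivative. By the density statement of Appendix \ref{density appendix}, it suffices to prove both bounds for $f$ in a dense subset, say $f \in C_c^\infty(\R^n \setminus \{0\})$, in which case $u \defeq (P(h)-z)^{-1} f \in H^2(\R^n)$. Setting $w(r,\theta) \defeq r^{(n-1)/2} u(r,\theta)$ and $g \defeq r^{(n-1)/2} f$ (an isometry onto $L^2((0,\infty) \times \US^{n-1}, dr\, d\theta)$), the equation $(P(h) - z)u = f$ becomes
\[
-h^2 w'' + \tilde V w - z w - \frac{h^2}{r^2} \Delta_{\US^{n-1}} w = g, \qquad \tilde V \defeq V + \frac{h^2 (n-1)(n-3)}{4 r^2}.
\]

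The central computation is to multiply this equation by $\phi(r) \bar w'$ for a bounded non-decreasing $\phi \in C^1([0,\infty))$ with $\phi(0)=0$, integrate over $(0, \infty) \times \US^{n-1}$, and take twice the real part. Using integration by parts in $r$ and $\theta$, together with the vanishing of the $r=0$ boundary terms (guaranteed by $w = r^{(n-1)/2} u$ with $n \ge 3$, by $u \in H^2(\R^n)$, and by the $rV \in L^\infty$ hypothesis \eqref{for self-adjointness}), one arrives at an identity of the schematic form
\[
h^2 \!\int \phi' |w'|^2 - \!\int (\phi \tilde V)' |w|^2 + \real(z) \!\int \phi' |w|^2 - h^2 \!\int (\phi/r^2)' |\nabla_\theta w|^2 = 2\real \langle g, \phi w' \rangle + 2 \imag(z) \!\int \phi \, \imag(w \bar w'),
\]
where the $r$-derivative of $\tilde V$ is interpreted as a signed Borel measure using \eqref{AC loc} and \eqref{ftc}. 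By the repulsivity condition \eqref{V prime cond}, one has $-(\phi V)' \ge (C_V\phi/(r+1) - \phi') V$ as measures; together with the requirements $(\phi/r^2)' \le 0$ (which makes both the centrifugal and the angular contributions non-negative, since $(n-1)(n-3)\ge 0$) and $\phi' \ge 0$, a $\phi$ with $\phi'$ comparable to $(r+1)^{-2s}$ at infinity and $(r+1) \phi' \le C_V \phi$ outside a neighborhood of the origin produces non-negative contributions on the left.

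For the first bound \eqref{nontrap est repulsive V}, after Cauchy--Schwarz on the right-hand side one combines the Morawetz identity with the imaginary-part identity $\imag \langle g, u\rangle = -\imag(z) \|u\|^2$ (which controls the stray $\imag(z)$-term) and a standard case split over $\real z \ge 0$ versus $\real z < 0$ to extract the $|z|^{1/2}$ gain. For the second bound \eqref{low freq est repulsive V}, no $|z|^{1/2}$ is available, so one uses a slightly different $\phi$ (growing linearly near $r=0$) together with the Hardy inequality $\|u/r\|_{L^2(\R^n)} \le 2(n-2)^{-1} \|\nabla u\|_{L^2(\R^n)}$ (where $n \ge 3$ is essential) to absorb the $|w|^2$ error terms into the non-negative $|w'|^2$ contribution; the constraint $s_1 + s_2 > 2$ is precisely what Hardy requires to close the estimate.

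\emph{Main obstacle.} The crux is the low-energy bound \eqref{low freq est repulsive V}, where the $|z|^{1/2}$ factor is unavailable; every error term must be absorbed using only the commutator output and Hardy, which constrains the choice of $\phi$ tightly. A secondary technical difficulty is making all the integrations by parts rigorous given that $V'$ is only a signed Borel measure (from \eqref{AC loc}) and $V$ may possess an $r^{-1}$-singularity at the origin (from \eqref{for self-adjointness}); this will require an approximation scheme that stays away from $r=0$ and passes to the limit using \eqref{ftc}.
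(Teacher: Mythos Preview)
Your plan matches the paper's approach closely: the paper also conjugates by $r^{(n-1)/2}$, defines a spherical energy $F[u](r)$ (computing $(wF)'$ for a weight $w$ is equivalent to your $\phi\bar w'$-multiplier identity), and chooses $w(r)=1-\tfrac{C_V}{C_V+\delta}(1+r)^{-\delta}$ so that the repulsive and centrifugal contributions have the right sign. Two steps are, however, underspecified in your plan.

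\textbf{Extension beyond a sector.} The Morawetz identity as you wrote it only closes when $\real z>0$: for $\real z\le 0$ the term $\real(z)\int\phi'|w|^2$ goes to the wrong side, and your ``standard case split'' does not obviously resolve this (the spectral bound $\|(P-z)^{-1}\|\le 1/\dist(z,[0,\infty))$ gives $1/|z|$, which is not $C/(h|z|^{1/2})$ for small $|z|$, and the $h^{-2}$ bound has the wrong weights). The paper first proves both estimates on the sector $\{|\imag z|<\alpha\,\real z\}$, then invokes a Phragm\'en--Lindel\"of argument, with auxiliary function $g(z)=e^{i(z^{-1})^{1/2}}$ to handle the accumulation at $z=0$, to extend to all of $\C\setminus[0,\infty)$.

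\textbf{The full range of weights in \eqref{low freq est repulsive V}.} The Morawetz output controls $\int(r+1)^{-1-\delta}\|hu'\|^2$, and a one-dimensional weighted Hardy inequality in the conjugated variable (rather than the $n$-dimensional Hardy you invoke, although the spirit is the same) converts this to control of $\int(r+1)^{-3-\delta}\|u\|^2$. This yields \eqref{low freq est repulsive V} only with the asymmetric exponents $(s_1,s_2)=(\tfrac{3+\delta}{2},\tfrac{1+\delta}{2})$. To reach general $s_1,s_2>1/2$ with $s_1+s_2>2$, the paper applies the three lines lemma to the analytic family $\lambda\mapsto(1+r)^{-\frac{3+\delta}{2}+\lambda}(P(h)-z)^{-1}(1+r)^{-\frac{1+\delta}{2}-\lambda}$ on the strip $0\le\real\lambda\le 1$, together with the adjoint bound. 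Your outline omits this interpolation step.
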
 
 
 \begin{remark}
From \eqref{resolv est in sector}, we see how the constant $C$ in \eqref{nontrap est repulsive V} depends on $s$ and $C_V$. The dependence of $C$ in \eqref{low freq est repulsive V} on $s_1$, $s_2$ and $C_V$ can be deduced from \eqref{rest}.
\end{remark}

Section \ref{nontrap resolv est section} is devoted to the proof of Theorem \ref{nontrap est thm repulsive V}, which builds on \cite[Theorem 1.2]{chda21}. In particular, Christiansen and Datchev obtained \eqref{nontrap est repulsive V} and \eqref{low freq est repulsive V} for bounded, repulsive potentials, nonnegative potentials on the half-line. The novelty of the present paper is that it extends these bounds to dimensions $n\ge 3$ for repulsive potentials that may be singular as $r \to \infty$. The corresponding problem in dimension two appears to be delicate, and to our knowledge remains open; we comment below on a possible approach.

\begin{remark} \label{sharpness remark}
In Appendix \ref{sharpness appendix}, we recall how for the case $V = 0$ and $n = 3$, the conditions on $s, \, s_1$ and $s_2$ in Theorem \ref{nontrap est thm repulsive V}, as well as the $h$- and $z$-dependencies of the right sides of \eqref{nontrap est repulsive V} and \eqref{low freq est repulsive V}, are nearly optimal in a suitable sense. 
\end{remark}

\begin{remark}
The weighted estimates underlying \eqref{thm1.1}, \eqref{nontrap est repulsive V}, and \eqref{low freq est repulsive V} hold under a condition weaker than \eqref{for self-adjointness}, namely that $| r^{n-1} V(r, \theta) | \to 0$ as $r \to 0$, see \eqref{Fr0} (the factor $r^{n-1}$ reflects the volume element of Lebesgue measure in polar coordinates). These estimates are derived for test functions in $C^\infty_0(\R^n)$ and are transferred to resolvent bounds via the density argument in Appendix~\ref{density appendix}. This step uses that the operator domain is $H^2(\R^n)$ and that $C^\infty_0(\R^n)$ is dense in $H^2(\R^n)$, which is why we impose the stronger hypothesis \eqref{for self-adjointness}. We note, however, that self-adjoint realizations of $-\Delta+V$ exist under weaker local assumptions on $V$. For example, if $V\in L^2_{\text{loc}}(\R^n)$ and $V\ge 0$, then $-\Delta+V$ is essentially self-adjoint on $C^\infty_0(\R^n)$ \cite[Theorem X.28]{resi75}; essential self-adjointness on $C^\infty_0(\R^n\setminus\{0\})$ holds under inverse square lower bounds \cite[Theorem X.30]{resi75}. It would be interesting to formulate the weighted estimates in such frameworks, which would allow more singular potentials. However, for the wave decay results discussed below, a restriction comes from the need to control $rV$ near the origin. We thus adopt the more streamlined approach here.

\end{remark}

We prove Theorems \ref{newpreliminary} and \ref{nontrap est thm repulsive V} using the \textit{spherical energy method}, a widely used technique for deriving weighted estimates for Schrödinger operators. The approach is based on separation of variables and the classical identity
\begin{equation} \label{conjugation 1}
    r^{\frac{n-1}{2}}(- \Delta) r^{-\frac{n-1}{2}} = -\partial^2_r + r^{-2} \Lambda,
\end{equation}
where 
\begin{equation} \label{Lambda}
    \Lambda \defeq -\Delta_{\US^{n-1}} + \frac{(n-1)(n-3)}{4},
\end{equation}
and $\Delta_{\US^{n-1}}$ denotes the negative Laplace-Beltrami operator on $\US^{n-1}$. The repulsivity condition is sufficiently advantageous to allow the use of a relatively simple weight--specifically, the same weight employed in \cite{chda21}--to obtain \eqref{est after integrating} and \eqref{uprimebound}. For more general potentials, it is usually necessary to instead conjugate the Laplacian by $e^{\varphi/h} r^{(n-1)/2}$ (see, e.g., \cite{cavo02, da14, gash22}) for a suitable phase $\varphi$. This results in a Carleman estimate with exponential losses as $h \to 0^+$. 

We use in a crucial way that $\Lambda \ge 0$ on $L^2(\US^{n-1})$, see \eqref{deriv wF lwr bd}. This is why our approach does not cover the case $n = 2$ where the effective potential $-1/(4r^2)$ has a strong negative singularity as $r \to 0$. We expect that repulsive potentials in dimension two can be treated by adapting the Mellin transform methods used in \cite{dgs23, ob24}.

As an application of \eqref{low freq est repulsive V}, we prove weighted energy decay for the solution to the wave equation
\begin{equation} \label{wave eqn srp}
\begin{cases}
(\partial^2_t - \Delta + V(x))u(t,x) = 0 & (t, x) \in \R \times \R^n, \, n \ge 3, \\
u(0,x) = u_0(x), \, \partial_tu(0,x) = u_1(x) & x \in \R^n,
\end{cases}
\end{equation}
where $u_0 \in H^1(\R^n)$ and $u_1 \in L^2(\R^n)$ have compact support. The potential obeys $V \ge 0$, \eqref{for self-adjointness}, \eqref{AC loc} with $\rho = \rho(n) = 1$, \eqref{V prime cond}, and the extra short range condition
\begin{equation} \label{stronger bd V}
\mathbf{1}_{\R^n \setminus B(0,1)} V  \le C(r + 1)^{-\delta(n)}, 
\end{equation}
for some $C > 0$ and $\delta(n) > 0$ such that
\begin{equation} \label{delta restrictions}
\delta(n) > \begin{cases}
\frac{1}{2} + \frac{n + 3}{4}  & n \neq 8, \\
\frac{1}{2} + 3 & n = 8. 
\end{cases}
\end{equation}
Since $P \defeq P(1) = -\Delta + V$ is self-adjoint (and nonnegative) under such conditions, we may use the spectral theorem for self-adjoint operators to represent the solution to \eqref{wave eqn srp} by
\begin{equation} \label{soln spect thm}
u(t, \cdot) = \cos(t \sqrt{P}) u_0 + \frac{\sin(t \sqrt{P})}{\sqrt{P}}u_1.
\end{equation}
For $s > 0$ fixed, define the weighted energy of the solution $u$ to \eqref{wave eqn srp} to be 
\begin{equation*}
E_s[u](t) = E_s(t) \defeq  \int_{\R^n} \langle x \rangle^{-2s}( |\partial_t u(t,x)|^2 + | \nabla u(t,x)|^2 + |u(t,x)|^2 )dx.
\end{equation*}
Set also
\begin{equation*}
E(0) \defeq  \|\nabla u_0 \|^2_{L^2} + \| u_1 \|^2_{L^2}.
\end{equation*}

\begin{theorem} \label{wave decay thm}
Suppose $V \ge 0$ satisfies \eqref{for self-adjointness}, \eqref{AC loc} with $\rho = 1$, \eqref{V prime cond}, as well as \eqref{stronger bd V}. Let $u_0 \in H^1(\R^n)$ and $u_1 \in L^2(\R^n)$ have compact support. For each $s$ such that
\begin{equation} \label{s restrictions}
s > \begin{cases}
\frac{n + 3}{4}  & n \neq 8, \\
3 & n = 8,
\end{cases}
\end{equation}
there exists $C_s > 0$ depending on $s$ but independent of $t$, $u_0$, and $u_1$ so that 
\begin{equation} \label{LED}
E_s(t) \le C_s\langle t \rangle^{-2} E(0),
\end{equation}
where $\langle t  \rangle \defeq (1 + |t|^2)^{1/2}$.
\end{theorem}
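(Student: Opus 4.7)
The plan is to use Stone's formula to express $u(t)$ as a spectral oscillatory integral against the resolvent $R(z) \defeq (P-z)^{-1}$, split into low- and high-frequency contributions, and extract the $\langle t\rangle^{-2}$ decay by integrating by parts in the spectral variable.

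From \eqref{soln spect thm} and Stone's formula applied to the absolutely continuous spectral representation of $P$, one writes
$$u(t, \cdot) = \frac{2}{\pi}\int_0^\infty \bigl[\lambda \cos(t\lambda)\,\imag R(\lambda^2 + i0) u_0 + \sin(t\lambda)\,\imag R(\lambda^2 + i0) u_1\bigr]\, d\lambda.$$
Fix $\chi \in C_c^\infty([0, \infty); [0, 1])$ with $\chi \equiv 1$ on $[0, 1]$ and $\supp \chi \subset [0, 2]$, and decompose $u = u_L + u_H$ by inserting $\chi(\lambda)$ and $1 - \chi(\lambda)$ respectively. For $u_L$, I would integrate by parts twice in $\lambda$ against the oscillating factor; thanks to the factor $\lambda$ on the $\cos$ piece and $\sin(0) = 0$ on the $\sin$ piece, the $\lambda = 0$ first-order boundary terms vanish, and the residual second-order boundary contributions at $\lambda = 0$ already carry the explicit $\langle t \rangle^{-2}$ factor. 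The bulk integrand in the second IBP involves $\partial_\lambda^k \imag R(\lambda^2+i0)$ for $k \le 2$, equivalently $R^2$ and $R^3$ via $\partial_\lambda R(\lambda^2) = 2\lambda R^2$. Because $u_0, u_1$ are compactly supported, these iterated resolvents can be controlled in weighted $L^2$: one successively inserts bounded compactly supported weights and applies \eqref{low freq est repulsive V} to each factor, choosing the exponents so that each pair satisfies $s_i > 1/2$ and $s_i + s_j > 2$. The dimension-dependent restriction \eqref{s restrictions} arises from summing the required weights through these iterations.

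For $u_H$, no high-frequency resolvent bound is assumed, so the argument proceeds differently: I would use the $H^1 \times L^2$ regularity of $(u_0, u_1)$ together with the short-range condition \eqref{stronger bd V}. Repeated integration by parts in $\lambda$ against $1 - \chi(\lambda)$ again trades $\lambda$-derivatives for powers of $t^{-1}$, the resulting $\lambda^{-k}$ factors are absorbed using $u_0 \in H^1 = \dom(\sqrt{P})$ and $u_1 \in L^2$, while \eqref{delta restrictions} ensures that the integrals over $\lambda \in [1, \infty)$ converge.

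The main obstacle I anticipate is the iterated weighted bound on $R^2$ and $R^3$ at low frequencies, which is not immediate from \eqref{low freq est repulsive V} since that estimate applies only to a single resolvent. The cleanest workaround is to exploit the compact support of the initial data by writing $u_0 = \chi_0 u_0$ for some $\chi_0 \in C_c^\infty(\R^n)$, inserting $\chi_0$ at appropriate points in the resolvent chain so that each application of $R$ is sandwiched between decaying weights satisfying the hypotheses of \eqref{low freq est repulsive V}. Careful bookkeeping of weights across the two integrations by parts then dictates the precise value $(n + 3)/4$ appearing in \eqref{s restrictions}, with the exceptional value $n = 8$ apparently corresponding to an arithmetic edge case in this counting where a strict inequality would otherwise be saturated.
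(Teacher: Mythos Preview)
Your proposal has a genuine gap in the low-frequency step, and it misidentifies the origin of the threshold \eqref{s restrictions}.

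The scheme of inserting compactly supported cutoffs ``at appropriate points in the resolvent chain'' to bound $\langle x\rangle^{-s}R^2$ or $\langle x\rangle^{-s}R^3$ does not work as stated. Writing $u_0=\chi_0 u_0$ lets you put a decaying weight on the \emph{right} of the chain, but after one application of $R$ the function $R\chi_0 u_0$ is no longer compactly supported, so you cannot reinsert $\chi_0$. To factor $\langle x\rangle^{-s}R^2\chi_0$ as a product of two weighted resolvents covered by \eqref{low freq est repulsive V} you would need a bound of the form $\langle x\rangle^{+s'}R\chi_0$ with a \emph{growing} weight on the left, which \eqref{low freq est repulsive V} does not provide. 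Thus the control of the resolvent square for $P$ must come from somewhere else.

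The paper resolves this in two ways. First, it works through Plancherel rather than direct integration by parts: one multiplies $u$ by a time cutoff $\phi$, writes $\widehat{\phi u}(\lambda-i\varepsilon)=(P-(\lambda-i\varepsilon)^2)^{-1}\hat v(\lambda-i\varepsilon)$ with $v=(\phi''+2\phi'\partial_t)u$ compactly supported in space \emph{and} time, and then controls $\int t^2E_s(t)\,dt$ by $\int\|\tfrac{d}{d\lambda}\langle x\rangle^{-s}R\hat v\|^2\,d\lambda$. This requires only \emph{one} $\lambda$-derivative of the weighted resolvent (hence only $R^2$, never $R^3$), and the passage from $\int_t^\infty E_s$ to $E_s(t)$ is handled separately by the differential inequality $E_s'\le CE_s$. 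Second, and crucially, the bound on $\tfrac{d}{d\lambda}\langle x\rangle^{-s}(P-\lambda^2)^{-1}\langle x\rangle^{-s}$ is \emph{not} obtained by iterating \eqref{low freq est repulsive V}; it is obtained by the resolvent identity $(P-\lambda^2)^{-1}\langle x\rangle^{-s}(1+K(\lambda))=R_0(\lambda)\langle x\rangle^{-s}$, differentiating in $\lambda$, and invoking a weighted bound on the \emph{free} resolvent square $\lambda\langle x\rangle^{-s}(-\Delta-\lambda^2)^{-2}\langle x\rangle^{-s}$. The condition $s>(n+3)/4$ (and the exceptional value at $n=8$) arises from the explicit kernel of this free operator, which is a Macdonald function $K_{n/2-2}$: the Hilbert--Schmidt and Schur estimates on that kernel near $\lambda=0$ force exactly the stated threshold on $s$. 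It has nothing to do with bookkeeping of weights through multiple applications of \eqref{low freq est repulsive V}.

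Your high-frequency discussion is also off: a high-frequency resolvent bound \emph{is} available, namely \eqref{nontrap est repulsive V}, and it is used (via Lemma~\ref{lap lemma}) uniformly in $\lambda$ together with the low-frequency bound; there is no separate large-$\lambda$ argument based on ``absorbing $\lambda^{-k}$ factors.''
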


\begin{remark}
Since $u(-t, \cdot) = \cos(t \sqrt{P}) u_0 +  (\sin(t \sqrt{P})/\sqrt{P})(-u_1)$, to prove \eqref{LED} it suffices to suppose $t \ge 0$.
\end{remark}

\begin{remark}
We expect that, by adapting the arguments of \cite[Section 3]{vo04b}, the assumption of compact support on the initial data can be relaxed. Specifically, the decay should continue to hold for initial conditions lying in an appropriate weighted Sobolev spaces. However, to keep the presentation technically streamlined, we suppose $u_0$ and $u_1$ have compact support. 
\end{remark}

\begin{remark}
The condition \eqref{delta restrictions} has a quirk in dimension eight compared to other dimensions. This appears to be an artifact of our approach, which analyzes the low-frequency behavior of the resolvent kernel in terms of Hilbert–Schmidt norms (Appendix \ref{deriv free resolv appendix}).
\end{remark}

For smooth, nonnegative potentials of compact support, the local energy
\begin{equation*}
E_{r_0}(t) \defeq \int_{B(0,r_0)}  |\partial_t u(t,x)|^2 + | \nabla u(t,x)|^2 + |u(t,x)|^2 dx, \qquad r_0 > 0,
\end{equation*}
obeys
\begin{equation} \label{compact support case}
E_{r_0}(t) = \begin{cases} 
O(e^{-ct}) \text{ for some } c> 0 & n \ge 3 \text{ odd}, \\
O(t^{-2n}) & n \ge 4 \text{ even}.
\end{cases}
\end{equation}
Indeed, Vainberg \cite{va75} showed \eqref{compact support case} for compactly supported perturbations of the Laplacian satisfying the so-called Generalized Huygens Principle (as defined in \cite{vo04c}). A result of Melrose and Sjöstrand on the propagation of singularities \cite{mesj78, mesj82} implies that this principle holds for a broad class of nontrapping perturbations of the Laplacian, which includes smooth, nonnegative, compactly supported potentials. The study of energy decay for nontrapping perturbations has a long history, tracing back to the works of Lax, Morawetz, and Phillips \cite{lmp63, mo66, mo75}.

On the other hand, bounds similar to \eqref{LED} were obtained in previous works for various classes of short range potentials. In \cite{za04}, Zappacosta considered potentials $V \in C^1(\R^3; (0,\infty))$ with $\partial^\alpha_xV = O(\langle x \rangle^{- \delta - |\alpha|})$ for all $0 \le |\alpha| \le 1$ and some $\delta > 2$. For each $\chi \in C^\infty_0(\R^3)$, the bound \\$\| \chi \sqrt{V}(\sin(t \sqrt{P})/\sqrt{P}) \sqrt{V} \chi \|^2_{L^2 \to L^2} = O(t^{-2})$ was proved. In \cite{vo04}, Vodev showed $E_{r_0}(t) = O(t^{-2})$ in dimension $n \ge 3$, where $V \in C^1(\R^n ; [0,\infty))$ obeys  
\begin{gather} 
 V = O(\langle x \rangle^{-\delta_0}) \qquad \text{for some $\delta_0 > 2$, and} \label{vodev cond 1} \\
2V + r \partial_rV \le C\langle x \rangle^{-\delta} \qquad \text{for some $C > 0$ and some $\delta > 1$.} \label{vodev cond 2}
\end{gather}

Additionally, it is assumed that $V$ has no resonance at zero energy, a condition closely related to the validity of a bound such as \eqref{low freq est repulsive V} for $|z|\ll1$. Vodev also obtained weighted energy decay for a class of long-range, nontrapping perturbations of the Laplacian that includes perturbations by a nonnegative long-range potential, provided the initial conditions are spectrally localized away from $[0,a]$ for $a>0$ sufficiently large \cite{vo04b}. We note that, in our approach, the positivity of the potential appears crucial for obtaining \eqref{low freq est repulsive V}. Indeed, the constant $\mathfrak{C}(E,s,\varepsilon,\|V^-\|_{L^\infty})$ in \eqref{thm1.1aux} blows up as $E\to0^+$ unless $\|V^-\|_{L^\infty}=0$.

We note the connection between \eqref{vodev cond 2} and our repulsiveness condition. If \\ $V \in C^1(\R^n ; (0,\infty))$ satisfies \eqref{V prime cond}, then $\partial_r (\log V(\cdot , \theta )) \le -C_V(r + 1)^{-1}$ uniformly in $\theta$. Integration in $r$ yields $0 \le V(r, \theta) \le C(r + 1)^{-C_V}$, so $2V + r\partial_r V \le 2V \le C(r + 1)^{-C_V}$, which is \eqref{vodev cond 2} provided $C_V > 1$.

In the absence of a bound like \eqref{low freq est repulsive V}, or a condition excluding a resonance or eigenvalue at zero, decay of wave equation solutions generally cannot be expected. Thus, in the present work the positivity of the potential serves as a sufficient condition to rule out threshold obstructions. Other sufficient conditions have been obtained in previous works; for example, smallness of the potential in the Rollnik and global Kato norms \cite{rosc04}. For related discussions and further references, see \cite{jene01, chda25,cdy25}.

The proof of Theorem \ref{wave decay thm} follows the strategy of \cite[Section 3]{vo04}, with modifications to account for the possible singularity of $V$ at the origin. The key step is to establish
\begin{equation} \label{wave decay outline}
 t^2 E_s(t) \le C t^2 \int_t^\infty E_s(\tau) d\tau \le CE(0), \qquad t \ge 1.
\end{equation}
Using Duhamel's formula and the Fourier transform ($t$ dual to $\lambda$), the Fourier transform of $u$ can be expressed in terms of $(P - \lambda^2)^{-1}$, see \eqref{FT identity}. Because the initial data are compactly supported, finite speed of propagation allows insertion of a cutoff function $\eta$, and Plancherel's theorem then reduces control of $E_s(t)$, to bounds on $\langle x \rangle^{-s}(P - \lambda^2)^{-1}\eta$, which comes from \eqref{low freq est repulsive V}, see also Lemma \ref{lap lemma}. 

However, the factor $t^2$ in \eqref{wave decay outline} corresponds to differentiation with respect to $\lambda$, so it is also necessary to control  $\langle x \rangle^{-s} \tfrac{d}{d\lambda}(P - \lambda^2)^{-1}\eta $. This is achieved under the stronger assumptions \eqref{for self-adjointness} with $\rho = 1$ (compare with \eqref{rho}) and \eqref{stronger bd V}. In particular, the restriction $\rho = 1$ arises from the need for a uniform bound on $\lambda V (-\Delta - \lambda^2)^{-1} \langle x \rangle^{-s}$ for $\imag \lambda > 0$, see \eqref{deriv resolv id}. It would be interesting to determine whether more singular potentials could be accommodated by controlling the $\lambda$-derivative in a less perturbative manner.

We expect that for certain potentials with mild spatial decay depending on the dimension, $t^{-2}$ is the optimal decay for the local energy. A sharper description should depend on a low-frequency expansion of the resolvent around $\lambda = 0$ (see, e.g., \cite{jene01}), rather than a bound alone. 

Another energy studied is the quantity
\begin{equation*}
E^{(1)}_K[u](t) \defeq \int_{K}  |\partial_t u(t,x)|^2 + | \nabla u(t,x)|^2 + V(x)|u(t,x)|^2 dx,
\end{equation*}
where $K \subseteq \R^n$ is a region of interest. In \cite[Theorem 1.1]{vo04}, Vodev studied the case $K = B(0, \gamma_0 t) \subseteq \R^n$ for $n \ge 3$ and suitable $0 < \gamma_0 < 1$. Under the assumption of no resonance at zero, along with \eqref{vodev cond 1} for some $\delta_0 > 1$ and \eqref{vodev cond 2} for constants $C> 0$ and $\delta > 1$, he proved $E^{(1)}_K[u](t) = O(t^{-1})$. In \cite[Theorems 1.1 and 1.2]{ik23}, Ikehata considered exterior subdomains $\Omega$ of $\R^n$, $n \ge 2$, excluding the origin. For  compact subsets $K \subseteq \Omega$, he established the same decay rate assuming $V$ is nonnegative, $C^1$, and obeys $x\cdot \nabla V + 2V \le 0$. The $O(t^{-1})$-decay was first showed by Morawetz \cite{mo61} for $V = 0$ in the exterior of a three-dimensional star-shaped obstacle (later improved to exponential decay in \cite{lmp63}).

There is an extensive body of literature on wave decay for higher-order perturbations. For general second-order perturbations that may exhibit trapping, logarithmic decay--rather than polynomial decay--is more typical. For historical background and related developments, see \cite{bu98, vo99, bu02, bo11, cavo04, sh18, chik20}.

The rest of the paper is organized as follows. In Section \ref{nontrap resolv est section}, we prove Theorem \ref{nontrap est thm repulsive V}. In Section \ref{Helmholtz resolvent section}, we apply Theorem \ref{nontrap est thm repulsive V} to prove norm bounds for $\langle x \rangle^{-s}(P - \lambda^2)^{-1}\langle x \rangle^{-s}$ and its $\lambda$-derivative. In Section \ref{wave decay section} we prove Theorem \ref{wave decay thm}. Finally, we include several appendices of technical results that assist with the proofs of earlier sections.

\medskip
\noindent{\textsc{Acknowledgements:}} We thank Kiril Datchev and Georgi Vodev for helpful discussions. We also thank the anonymous referee for careful reading and helpful comments that improved the paper. J. S. and A. L.-H. gratefully acknowledge support from NSF DMS-2204322. J. S. was also supported by a University of Dayton Research Council Seed Grant.

 \section{Proof of Theorem \ref{nontrap est thm repulsive V}} \label{nontrap resolv est section}
In this section, we prove Theorem \ref{nontrap est thm repulsive V}. Throughout this section, we take $P(h)$ as in \eqref{P}, and assume the potential $V$ satisfies \eqref{nonneg} through \eqref{AC loc} and\eqref{V prime cond}.

By \eqref{conjugation 1},
\begin{equation} \label{conjugation 2}
\begin{split}
  P^{\pm}(h) &\defeq  r^{\frac{n-1}{2}}\left( P(h) - E \pm i\varepsilon \right) r^{-\frac{n-1}{2}}\\
  &= -h^2\partial^2_r + h^2r^{-2} \Lambda + V  - E \pm i\varepsilon,
 \end{split}
\end{equation}
where we let $E$ and $\ep$ vary in $[0, \infty)$. Let $u \in r^{(n-1)/2} C^\infty_0(\R^n)$. Define a spherical energy functional $F[u](r)$,
\begin{equation} \label{F}
    F(r) = F[u](r) \defeq \|hu'(r, \cdot)\|^2 - \langle (h^2 r^{-2} \Lambda + V^R(r, \cdot)  - E)u(r, \cdot), u(r, \cdot) \rangle,
\end{equation}
where $\| \cdot \|$ and $\langle \cdot, \cdot \rangle$ denote the norm and inner product on $L^2(\mathbb{S}_\theta^{n-1})$.   We take complex conjugation to occur in the first argument of $\langle \cdot, \cdot \rangle$. Here, $V^R(r, \theta)$ is the measurable function defined by $V^R(r, \theta) \defeq \lim_{k \to \infty} V((r + (1/k))\theta)$ for $k \in \NN$. The limit exists for each $r$ and $\theta$ since each $V(\cdot, \theta)$ has bounded variation. Each $V^R(\cdot, \theta)$ is decreasing thanks to \eqref{nonneg}, \eqref{ftc} and \eqref{V prime cond}.

 For a weight $w(r)$ which is absolutely continuous, nonnegative, and increasing, we compute the derivative of $wF$ in the sense of distributions on $(0, \infty)$. For this we need the following technical lemma whose proof we give in Appendix \ref{technical lemma appendix}.
 \begin{lemma} \label{technical lemma}
  The function $r \mapsto \int_{\US^{n-1}} V^{R}(r, \theta) |u(r, \theta)|^2 d\theta$ has locally bounded variation and its derivative in the sense of distributions on $(0, \infty)$ is given by
  \begin{equation} \label{technical dist formula}
  \begin{split}
  C^\infty_0(0, \infty) \ni \varphi &\mapsto \int_0^\infty \varphi(r) \int_{\US^{n-1}} V(r, \theta) 2 \real(\overline{u}u')  d\theta dr\\
  &+ \int_{\US^{n-1}}  \int_0^\infty  \varphi(r) |u(r, \theta)|^2 dV(r,\theta) d\theta.
  \end{split}
  \end{equation} 
 \end{lemma}

Note that, aside from the term in $F(r)$ involving $V^R(r, \theta)$, it is clear the remaining terms are functions of $r$ with locally bounded variation on $(0, \infty)$. Thus by Lemma \ref{technical lemma}, $F(r)$ itself is of locally bounded variation on $(0, \infty)$.
 
 Using \eqref{technical dist formula} we have, in the sense of distributions on $(0,\infty)$:
\begin{equation} \label{deriv wF}
\begin{split}
    (wF)' &=  wF' + w'F\\
    &= w(-2\real \langle (-h^2\partial^2_r + h^2r^{-2}\Lambda + V - E)u  , u' \rangle \\
     &+2 h^2 r^{-3} \langle \Lambda u, u \rangle  - \textstyle\int_{\US^{n-1}} |u(r, \theta)|^2 dV(r, \theta) d\theta )   \\
     &+ w' (\|hu'\|^2 - \langle h^2 r^{-2} \Lambda  u, u \rangle + \langle (E - V)u, u \rangle\\
    &= -2 w \real \langle P^{\pm}(h) u, u' \rangle \pm 2\varepsilon w \imag \langle u,u'\rangle + w'\|hu'\|^2\\ 
    &+(2wr^{-1} - w') \langle h^2r^{-2}\Lambda u,u\rangle  + Ew'\| u\|^2 \\
    &-\textstyle\int_{\US^{n-1}} |u(r, \theta)|^2 (w(r) dV(r, \theta) + w'(r)V(r, \theta)) d\theta  . 
    \end{split}
\end{equation}

First we show \eqref{nontrap est repulsive V}. Since increasing $s$ decreases the left side of \eqref{nontrap est repulsive V}, without loss of generality we may take $0 < \delta \defeq 2s - 1 < 1$. We will show the last line of \eqref{deriv wF} can be made to have a suitable lower bound, using 
\begin{equation} \label{w general repulsive}
w(r) \defeq 1 - \frac{C_V}{C_V + \delta}(1 + r)^{-\delta}.
\end{equation}
For such $w$, we clearly have
\begin{equation*}
w'(r) = \frac{\delta C_V}{C_V + \delta}(r + 1)^{-1 - \delta}.
\end{equation*}
Therefore, on the one hand
\begin{equation*}
2wr^{-1} - w' = 2r^{-1} (r +1)^{-\delta}\big((r +1)^\delta - \frac{C_V}{C_V + \delta} \big[ 1 +  \frac{\delta r}{2(r + 1)}  \big]   \big) \ge 0,
\end{equation*}
where we used 
\begin{equation*}
(r +1)^{\delta} -  \frac{C_V}{C_V + \delta} \ge  \frac{\delta C_V}{C_V + \delta} \int_1^{r +1} s^{\delta - 1} ds \ge  \frac{\delta C_V r}{(C_V + \delta)(r+1)},
\end{equation*}
since $\delta < 1$. On the other hand, using \eqref{V prime cond}, we have, in the sense of measures on bounded Borel subsets of $(0, \infty)$,
\begin{equation*}
wdV + w'V  = \frac{\delta C_V V}{(C_V + \delta)(r + 1)^{1 + \delta}} + wdV \le \frac{C_V V^+}{1 + r} ( (r +1 )^{-\delta} - 1 ) \le 0.
\end{equation*}

Thus, the last two estimates and \eqref{deriv wF} imply, for any $\varphi \in C^\infty_0((0,\infty); [0, \infty))$

\begin{equation} \label{deriv wF lwr bd}
\begin{split}
\int \varphi d(wF) &=  -\int (wF) \varphi'dr\\
 &\ge \int \big( -2 w \real \langle P^{\pm}(h) u, u' \rangle \pm 2\varepsilon w \imag \langle u,u'\rangle \\ 
&+ w' \| hu'\|^2 + E w'\| u\|^2\big) \varphi dr. 
\end{split}
\end{equation}
Note that in the first line of \eqref{deriv wF lwr bd} we applied \eqref{dist deriv}. Now, take a sequence of $\varphi_k \in C^\infty_0((0,\infty); [0, 1])$ that converges pointwise to the indicator function $\mathbf{1}_{(r_0, r_1]}$ with $0 < r_0 \ll 1$ and $r_1$ large enough so that $u(r, \theta) = 0$ for near $[r_1, \infty) \times \US^{n-1}$. Substituting $ \varphi = \varphi_k$ in \eqref{deriv wF lwr bd}, sending $k \to \infty$, and applying the dominated convergence theorem and \eqref{ftc} gives 
\begin{equation} \label{pre est after integrating}
\int_{r_0}^\infty E w' \| u\|^2 + w' \| hu'\|^2 dr  + w(r_0) F^R(r_0) \le  \int_{r_0}^\infty 2 w \real \langle P^{\pm}(h) u, u' \rangle \mp 2\varepsilon w \imag \langle u,u'\rangle dr.
\end{equation} 

Since $u = r^{(n-1)/2} v$ for some $v \in C^\infty_0(\R^n)$, we recognize that 
\begin{equation} \label{Fr0}
\begin{split}
F^R(r_0) &= \|hu'(r_0, \cdot) \|^2 +  r_0^{n-3} \langle h^2  \Delta_{\US^{n-1}}v(r_0, \cdot), v(r_0, \cdot) \rangle \\
&+(Er^{n-1}_0 - h^24^{-1}(n-1)(n-3)r_0^{n-3}) \|v(r_0, \cdot)\|^2 + r_0^{n-1} \langle V^R(r_0, \cdot)v(r_0, \cdot), v(r_0, \cdot) \rangle.
\end{split}
\end{equation}
We rewrite the term in \eqref{Fr0} involving $\Delta_{\US^{n-1}}$ using the well known formula for the Laplacian in spherical coordinates:
\begin{equation*}
r^{-2}\Delta_{\US^{n-1}} = \Delta - \partial^2_r - (n-1)r^{-1} \partial_r. 
\end{equation*}
Therefore, 
\begin{equation} \label{rewrite laplace beltrami}
\begin{split}
r_0^{n-3}& \langle h^2  \Delta_{\US^{n-1}}v(r_0, \cdot), v(r_0, \cdot) \rangle\\
&= h^2 r_0^{n-1}\langle (\Delta v)(r_0, \cdot), v(r_0, \cdot)  \rangle \\
&- h^2r_0^{n-1}\langle (\partial^2_rv)(r_0, \cdot), v(r_0, \cdot) \rangle - h^2(n-1)r_0^{n-2}\langle (\partial_rv)(r_0, \cdot), v(r_0, \cdot) \rangle.
\end{split}
\end{equation}
We can express the differential operators $\partial_r$ and $\partial^2_r$ with respect to the Euclidean coordinate system,
\begin{equation} \label{r derivatives}
\partial_r = r^{-1} \sum_{j=1}^n x_j \partial_{x_j}, \qquad \partial^2_r = r^{-2} \sum_{k=1}^n x_k \sum_{j=1}^n x_j \partial_{x_k}\partial_{x_j}.
\end{equation}
Thus by \eqref{for self-adjointness}, \eqref{rewrite laplace beltrami} and \eqref{r derivatives}, all terms in \eqref{Fr0} tend to zero as $r_0 \to 0$, except for possibly $\|hu'(r, \cdot) \|^2$ in dimension three, which in that case tends to $|v(0)|^2\int_{\US^{n-1}} d\theta$. We conclude
\begin{equation*}
\lim_{r_0 \to 0} w(r_0) F(r_0) = w(0) F(0) = \begin{cases} \omega_{n-1} w(0)|v(0)|^2  & n = 3, \\
0 & n \ge 4,\end{cases}
\end{equation*}
where $\omega_{n-1}$ is the $(n-1)$-dimensional volume of $\US^{n-1}$.

Thus in view of \eqref{pre est after integrating} and $0 < w \le 1$,
\begin{equation} \label{est after integrating}
\begin{split}
\int_{0}^\infty  Ew' \| u\|^2 &+ w' \| hu'\|^2 dr \\
&\le 2 \big( \int_{0}^\infty  \frac{1}{h^2w'} \| P^{\pm}(h) u \|^2 dr \big)^{1/2} \big(\int_{0}^\infty  w'\| hu'\|^2 dr \big)^{1/2} \\
&+  \frac{2\ep}{h} (\int_0^\infty   \|u\|^2 dr)^{1/2} (\int_0^\infty \|hu'\|^2 dr)^{1/2}. 
\end{split}
\end{equation}
We now estimate,
\begin{equation*}
    \begin{split}
        \int_0^\infty \|hu'\|^2dr &= \real \int_0^\infty \langle u, -h^2u'' \rangle dr \\
        &= \real \big( \int_{0}^\infty \langle u, P^\pm(h)u \rangle dr + \int_0^\infty \langle u, (E - V  - h^2 r^{-2}\Lambda)u \rangle dr \mp i \ep \int^\infty_0 \|u\|^2 dr \big)\\
        &= \real \int_{0}^\infty \langle u, P^\pm(h)u \rangle dr + \int_0^\infty \langle u, (E - V  - h^2 r^{-2}\Lambda)u \rangle dr \\
        &\le \big( \int_0^\infty \frac{1}{w'} \|P^\pm(h)u\|^2dr \big)^{1/2} \big(\int_0^\infty w'\|u\|^2dr \big)^{1/2} + (E+\|V^-\|_{L^\infty}) \int_0^\infty \|u\|^2dr,
    \end{split}
\end{equation*}
and 
\begin{equation*} 
    \begin{split}
        \ep \int_0^\infty \|u\|^2 dr &= \ep \| v\|^2_{L^2} \\
        &= |\imag \langle (P(h) - E \pm i\ep)v, v \rangle_{L^2}| \\
        &= \big| \imag \int_0^\infty \langle P^{\pm}(h)u, u \rangle dr \big| \\
        &\le \big( \int_0^\infty \frac{1}{w'} \|P^\pm(h)u\|^2dr \big)^{1/2} \big(\int_0^\infty w'\|u\|^2dr \big)^{1/2}.
    \end{split}
\end{equation*}
Combining these gives 

\begin{equation*}
\frac{\ep^2}{h^2} \int_0^\infty \|u\|^2 dr \cdot \int_0^\infty \|hu'\|^2dr  \le (E + \ep+ \|V^-\|_{L^\infty}) \int_0^\infty \frac{1}{h^2w'} \|P^\pm(h)u\|^2dr \cdot \int_0^\infty w'\|u\|^2dr.
\end{equation*}
Plugging this into \eqref{est after integrating} yields

\begin{equation} \label{est after integrating 2}
\begin{split}
\int_{0}^\infty Ew' \| u\|^2 &+  w'\| hu'\|^2 dr \\
&\le  2 \big( \int_0^\infty \frac{1}{h^2w'} \|P^\pm(h)u\|^2dr \big)^{1/2}\\
&\cdot \big( \big(\int_0^\infty w'\|hu'\|^2dr \big)^{1/2} + (E+\ep+\|V^-\|_{L^\infty})^{1/2} \big(\int_0^\infty w'\|u\|^2dr \big)^{1/2} \big).
\end{split}
\end{equation}

Now restrict $E > 0$ and complete the square in \eqref{est after integrating 2} to find,

\begin{equation} \label{complete the square}
\begin{split}
\big( E^{1/2} \big( \int_0^\infty& w'\|u\|^2dr \big)^{1/2} - \frac{(E+\ep+\|V^-\|_{L^\infty})^{1/2} }{E^{1/2}} \big( \int_0^\infty \frac{1}{h^2w'} \|P^\pm(h)u\|^2dr 
\big)^{1/2} \big)^2 \\
&+ \big( \big( \int_0^\infty w'\|hu'\|^2dr \big)^{1/2} - \big( \int_0^\infty \frac{1}{h^2w'} \|P^\pm(h)u\|^2dr 
\big)^{1/2} \big)^2\\
&\le \frac{2E + \ep+\|V^-\|_{L^\infty}}{E} \int_0^\infty \frac{1}{h^2w'} \|P^\pm(h)u\|^2dr.
\end{split}
\end{equation}
Dropping the second term on the left side of \eqref{complete the square} implies, for all $E > 0$ and $\ep \ge 0$,
\begin{equation} \label{E and ep est}
\begin{split}
 E^{1/2} \big(& \int_0^\infty  w'\|u\|^2dr \big)^{1/2} \\
 &\le \big(\frac{(E+\ep+\|V^-\|_{L^\infty})^{1/2}}{E^{1/2}} + \frac{(2E+\ep+\|V^-\|_{L^\infty})^{1/2}}{E^{1/2}} \big) \big( \int_0^\infty \frac{1}{h^2w'} \|P^\pm(h)u\|^2dr \big)^{1/2}.
 \end{split}
\end{equation}

Recall that $w' = C_V \delta (C_V + \delta)^{-1} (r + 1)^{-1-\delta}$ and $\delta = 2s -1$. From \eqref{E and ep est} and the density argument in Appendix \ref{density appendix} we get, rewriting $z=E \pm i\varepsilon$, that
\begin{equation}\label{thm1.1_2}
\|(1+r)^{-2s}(P-z)^{-1}(1+r)^{-2s}\|_{L^2(\mathbb{R}^n)\to L^2(\mathbb{R}^n)}\leq \frac{\mathfrak{C}(E,s,\varepsilon,\|V^-\|_{L^\infty})}{h}\,
\end{equation}
where 
\begin{equation}\label{thm1.1aux}
\mathfrak{C}(E,s,\varepsilon, \|V^-\|_{L^\infty})=\dfrac{C_V+\delta}{E\delta C_V}\left((E+\ep+\|V^-\|_{L^\infty})^{1/2}+(2E+\ep+\|V^-\|_{L^\infty})^{1/2}\right).
\end{equation}

From this point on, we assume $V^-=0$. Consider the sector $\{ z \in \C : |\imag z| < \alpha \real z \}$ for $0 < \alpha < 1$. From \eqref{E and ep est}, we get for all $h > 0$,$E \pm i \ep \in \{ z \in \C : |\imag z| < \alpha \real z \}$, and $u \in r^{n-1/2}C^\infty_0(\R^n)$, 

\begin{equation} \label{alpha est}
\begin{split}
 |(E &\pm i \ep)^{1/2}|\big( \int_0^\infty  (r +1)^{-2s}\|u\|^2dr \big)^{1/2} \\ &\le h^{-1} (1 + \alpha^2)^{1/4} \big( \frac{1}{\delta} + \frac{1}{C_V} \big)((1 + \alpha)^{1/2} + (2 + \alpha)^{1/2}) \big( \int_0^\infty (r +1)^{2s} \|P^\pm(h)u\|^2dr \big)^{1/2},
  \end{split}
\end{equation}
Here, our branch of the complex square root is chosen so that $\imag (E \pm i\ep)^{1/2} > 0$, and we used that $|(E \pm i\ep)|^{1/2} = (E^2 + \ep^2)^{1/4} \le E^{1/2}(1 + \alpha^2)^{1/4}$ for $E \pm i \ep \in \{ z \in \C : |\imag z| < \alpha \real z \}$. 
Since $u \in r^{(n-1)/2} C^\infty_0(\R^n)$, a standard density argument, which we review in Appendix \ref{density appendix}, shows that \eqref{alpha est} implies, 
\begin{equation} \label{resolv est in sector}
\begin{split}
\|z^{1/2}(r + 1)^{-s}(P(h) &-z)^{-1}(r +1)^{-s}\|_{L^2 \to L^2} \\
&\le h^{-1}(1 + \alpha^2)^{1/4} \big( \frac{1}{\delta} + \frac{1}{C_V} \big)((1 +\alpha)^{1/2} + (2 + \alpha)^{1/2}),
\end{split}
\end{equation}
on $\{ z \in \C : |\imag z| < \alpha \real z \}$ and for any $0 < \alpha < 1$. To extend this bound to all $z \in \C \setminus [0, \infty)$, we use the Phragm\'en-Lindel\"of principle \cite{em} in the following way. For $u, v \in L^2(\R^n)$, put 
\begin{equation*}
U(z) \defeq z^{1/2} \langle (r + 1)^{-s}(P(h) -z)^{-1}(r +1)^{-s}u, v \rangle_{L^2}.
\end{equation*}
Then $U(z)$ in analytic in $\Omega_\alpha \defeq \{ z \in \C : \alpha \real z < |\imag z|\}$. By \eqref{resolv est in sector}, on $\partial \Omega_\alpha \setminus \{0 \}$ we have
\begin{equation} \label{U on bdry}
|U(z)| \le h^{-1}(1 + \alpha^2)^{1/4} \big( \frac{1}{\delta} + \frac{1}{C_V} \big) ((1 + \alpha)^{1/2} + (2 +\alpha)^{1/2}) \|u\|_{L^2} \|v\|_{L^2}. 
\end{equation}
On the other hand, in $\Omega_\alpha$, we have the standard bound
\begin{equation} \label{std bd}
|U(z)| \le \frac{|z|^{1/2}\|u \|_{L^2} \|v\|_{L^2}}{\dist(z,[0, \infty))} = \begin{cases} \frac{\|u \|_{L^2} \|v\|_{L^2}}{|z|^{1/2}} & \real z < 0, \\ 
\frac{|z|^{1/2}\|u \|_{L^2} \|v\|_{L^2}}{|\imag z|} & \real z \ge 0, \, z \in \Omega_\alpha,
\end{cases}
\end{equation}
where we used
\begin{equation*}
\frac{1}{\dist(z,[0, \infty))} = \frac{1}{ \inf_{r \ge 0} ((\real z - r)^2 + (\imag z)^2)^{1/2}} = \begin{cases} \frac{1}{|z|} & \real z <0 , \\ 
\frac{1}{|\imag z|} & \real z \ge 0, \, z \in \Omega_\alpha. 
\end{cases}
\end{equation*}

Finally, define, $g(z) = e^{i (z^{-1})^{1/2}}$, where our branch of the square root is as above. In $\Omega_\alpha$, $|g(z)| \le e^{-c_\alpha |z|^{-1/2}}$ for some $0 < c_\alpha < 1$ depending on $\alpha$. It follows because, from the definition of $\Omega_\alpha$,  there exists $\theta_\alpha \in (0, \pi/4)$ so that any $z \in \Omega_\alpha$ takes the form $|z|e^{i\theta}$ with $\theta_\alpha < \theta < 2\pi -\theta_\alpha$. Whence $\real (i (z^{-1})^{1/2}) = -|z|^{-1/2}\sin(\theta/2) \le -|z|^{-1/2}\sin(\theta_\alpha/2)$. Combining with \eqref{std bd} gives
\begin{equation} \label{sigma bd}
\limsup_{z \to 0, \, z \in \Omega_\alpha} |g(z)|^{\sigma} |U(z)| = 0, \qquad \sigma > 0.
\end{equation}
Therefore, from \eqref{U on bdry} and \eqref{sigma bd}, the Phragm\'en Lindel\"of Theorem (Theorem \ref{pl thm} in Appendix \ref{pl appendix}) implies that \eqref{resolv est in sector} holds for all $z \in \Omega_\alpha$ too. Sending $\alpha \to 0^+$ completes the proof of \eqref{nontrap est repulsive V}.

To prove \eqref{low freq est repulsive V}, start again at \eqref{complete the square} and drop the first term on the left hand side. Still working on $\{ z \in \C : |\imag z| < \alpha \real z \}$, some manipulations give
\begin{equation}\label{uprimebound}
\left(\int_0^\infty w'\|hu'\|^2\,dr\right)^{1/2}\leq (1+\sqrt{2+\alpha})\left(\int_0^\infty \frac{1}{h^2 w'} \|P^{\pm}(h)u\|^2 dr\right)^{1/2}.
\end{equation}
By integration by parts,
\begin{equation*}
\begin{split}
\int_0^\infty (r +1)^{-3-\delta}\|u\|^2\,dr&=\frac{2}{2+\delta}\int_0^\infty (r +1)^{-2-\delta}\real\langle u,u'\rangle\,dr\\
&\leq h^{-1}\left(\int_0^\infty(r +1)^{-1-\delta}\|hu'\|^2\,dr\right)^{1/2}\left(\int_0^\infty (r +1)^{-3-\delta}\|u\|^2\,dr\right)^{1/2},
\end{split}
\end{equation*}
which implies
\begin{equation}\label{poincare}
\left(\int_0^\infty (r + 1)^{-3-\delta}\|u\|^2\,dr\right)^{1/2}\leq h^{-1}\left(\int_0^\infty(r +1)^{-1-\delta}\|hu'\|^2\,dr\right)^{1/2}.
\end{equation}

From \eqref{uprimebound}, \eqref{poincare} and $w' = C_V \delta (C_V + \delta)^{-1} (r + 1)^{-1-\delta}$, 
\begin{equation*}
\left(\int_0^\infty (r+1)^{-3-\delta}\|u\|^2\,dr\right)^{1/2}\leq h^{-2}\big( \frac{1}{\delta} + \frac{1}{C_V} \big) (1+\sqrt{2+\alpha})  \left(\int_0^\infty (r +1)^{1 + \delta}\|P^{\pm}(h)u\|^2\,dr\right)^{1/2}.
\end{equation*}
Using again the density argument in Appendix \ref{density appendix}, for $0 < \delta < 1$,
\begin{equation}\label{resolv est in sector low freq}
\|(1+r)^{-\frac{3+\delta}{2}}(P(h)-z)^{-1}(1+r)^{-\frac{1+\delta}{2}} \|_{L^2 \to L^2 }\leq h^{-2}\left(\delta^{-1}+C_V^{-1}\right)  (1+\sqrt{2+\alpha}),
\end{equation}
on $\{ z \in \C : |\imag z| < \alpha \real z \}$. Then, as above, \eqref{std bd}, the Phragm\'en Lindel\"of Theorem, and sending $\alpha \to 0^+$, imply
\begin{equation*}
\|(1+r)^{-\frac{3+\delta}{2}}(P(h)-z)^{-1}(1+r)^{-\frac{1+\delta}{2}} \|_{L^2 \to L^2 }\leq h^{-2}\left(\delta^{-1}+C_V^{-1}\right)  (1+\sqrt{2}), \qquad z \in \C \setminus [0, \infty).
\end{equation*}
Since the norm of an operator and its adjoint coincide,
\begin{equation*}
\|(1+r)^{-\frac{1+\delta}{2}} (P(h)-z)^{-1} (1+r)^{-\frac{3+\delta}{2}} \|_{L^2 \to L^2 }\leq h^{-2}\left(\delta^{-1}+C_V^{-1}\right)  (1+\sqrt{2}), \qquad z \in \C \setminus [0, \infty).
\end{equation*}
The three lines lemma then says that for fixed $z \in \C \setminus [0, \infty)$, the analytic mapping
\begin{equation*}
\lambda \mapsto (1+r)^{-\frac{3+\delta}{2} +\lambda }(P(h)-z)^{-1}(1+r)^{-\frac{1+\delta}{2} - \lambda}, \qquad 0 < \real \lambda < 1, 
\end{equation*}
(with values in the space of bounded operators $L^2(\R^n) \to L^2(\R^n)$) obeys 
\begin{equation} \label{rest}
\| (1+r)^{-\frac{3+\delta}{2} + \theta }(P(h)-z)^{-1}(1+r)^{-\frac{1+\delta}{2} - \theta} \|_{L^2 \to L^2} \le  h^{-2}\left(\delta^{-1}+C_V^{-1}\right) (1+\sqrt{2}), \qquad \theta \in [0, 1]. 
\end{equation}

 Having established \eqref{rest}, to finish, we need to see that we can choose $\delta$ and $\theta$ appropriately to arrive at \eqref{low freq est repulsive V}. That is, we need to attain the more general weights characterized by $s_1, s_2 > 1/2$, $s_1 + s_2 > 2$. However, because we have the restrictions $\delta \in (0,1)$ and $\theta \in [0,1]$, we first need to make reductions as follows. Since decreasing $s_1$ or $s_2$ in \eqref{low freq est repulsive V} increases the left side, it suffices to suppose $s_1, \, s_2 > 1/2$, $2< s_1 + s_2 < 3$. Furthermore, by taking the adjoint, it is no restriction to have $s_1 \le s_2$. {If we write $s_1 = (1 + 2\delta_2)/2$ for some $\delta_2 > 0$, then we may replace $s_2$ by $\min(s_2, (3 + \delta_2)/2 )$. Having made these reductions, \eqref{low freq est repulsive V} follows from \eqref{rest} by setting $\delta = s_1 + s_2 -2 < 1$, $\theta = (s_2 - s_1 + 1)/2 \le (4 - \delta_2)/4 < 1$. } 
 
 \begin{remark} \label{no zero eval remark}
 The bound \eqref{E and ep est} with $\ep = 0$ rules out $P(h)$ having an eigenvalue $E > 0$. When $V^- = 0$, a zero eigenvalue is ruled out by combining \eqref{est after integrating 2} (with $E = \ep = 0$) with \eqref{poincare}.
 \end{remark}

\section{Resolvent bounds for wave decay} \label{Helmholtz resolvent section}

In this section, we consider the operator $P \defeq P(1) = -\Delta + V$, with $P(h)$ as in \eqref{P}; $V$ obeys \eqref{nonneg} through \eqref{AC loc} and \eqref{V prime cond}. As a consequence of Theorem \ref{nontrap est thm repulsive V}, we prove several more resolvent bounds for $P$, which enable us in Section \ref{wave decay section} to establish weighted energy decay for the solution to the wave equation \eqref{wave eqn srp}. Throughout this section, $C$ denotes a positive constant whose precise value may change, but is always independent $\lambda$, which plays the role of our spectral parameter.

\begin{lemma} \label{lap lemma}
Fix $s_1,\, s_2 > 1/2$ with $s_1 + s_2 > 2$. There exist $C > 0$ so that for all $\lambda \in \C$ with $0 < |\imag \lambda| \le 1$, and for all  multiindices $\alpha_1, \, \alpha_2$ with $|\alpha_1| + |\alpha_2| \le 2$,
\begin{equation} \label{lap A}
\|  \langle x \rangle^{-s_1} \partial^{\alpha_1}_x (P- \lambda^2)^{-1} \partial^{\alpha_2}_x  \langle x \rangle^{-s_2} \|_{L^2(\R^n) \to L^2(\R^n) } \le C(1 + |\real \lambda|)^{|\alpha_1| + |\alpha_2|-1 }. 
\end{equation}
\end{lemma}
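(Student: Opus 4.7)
The plan is to establish the bound by induction on $m \defeq |\alpha_1| + |\alpha_2| \in \{0,1,2\}$, writing $\chi \defeq \langle x \rangle^{-s_1}$ and $\tilde\chi \defeq \langle x\rangle^{-s_2}$ (comparable to the weights in Theorem \ref{nontrap est thm repulsive V}). Since $\lambda^2 \in \C\setminus[0,\infty)$ for every $\lambda$ with $\imag\lambda\ne 0$, Theorem \ref{nontrap est thm repulsive V} is available with $h=1$. For $m=0$, the bound \eqref{nontrap est repulsive V} gives $\|\chi(P-\lambda^2)^{-1}\tilde\chi\|_{L^2\to L^2}\le C|\lambda|^{-1}$, and \eqref{low freq est repulsive V} gives the uniform bound $C$. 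These combine to $C/(1+|\real\lambda|)$, using $|\lambda|\ge\max(|\real\lambda|,1)$ when $|\lambda|\ge 1$ and $|\real\lambda|\le 1$ when $|\lambda|\le 1$. Observe also that $|\imag\lambda|\le 1$ forces $|\lambda|^2\le(1+|\real\lambda|)^2$, a bound used repeatedly below.

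For $m=1$, by duality it suffices to take $\alpha_1 = e_j$, $\alpha_2=0$. Set $u\defeq(P-\lambda^2)^{-1}\tilde\chi f$ and pair $(P-\lambda^2)u=\tilde\chi f$ with $\chi^2\bar u$; integration by parts on the Laplacian produces $\int\chi^2|\nabla u|^2\,dx$, while lower-order pieces (involving $\nabla\chi^2$, the nonnegative term $V|u|^2$, and $\real(\lambda^2)\int\chi^2|u|^2$) are absorbed via Cauchy--Schwarz using $|\partial^\beta\chi^2|\le C\chi^2$. The resulting inequality
\[
\|\chi\nabla u\|_{L^2}^2\le C\bigl((1+|\lambda|^2)\|\chi u\|_{L^2}^2+\|\chi u\|_{L^2}\|f\|_{L^2}\bigr),
\]
together with the $m=0$ bound and $|\lambda|^2\le(1+|\real\lambda|)^2$, yields $\|\chi\nabla u\|_{L^2}\le C\|f\|_{L^2}$.

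The case $m=2$ splits into a mixed sub-case ($|\alpha_1|=|\alpha_2|=1$) and a one-sided sub-case ($|\alpha_1|=2$, $\alpha_2=0$, or its adjoint). For the mixed sub-case, repeat the previous energy estimate with $u\defeq(P-\lambda^2)^{-1}\partial_j(\tilde\chi f)$ (where $(P-\lambda^2)u=\partial_j(\tilde\chi f)$ is interpreted in $H^{-1}$); the new term $\int\chi^2\partial_j\bar u\cdot\tilde\chi f\,dx$ is absorbed by Cauchy--Schwarz into $\epsilon\|\chi\nabla u\|_{L^2}^2+C_\epsilon\|f\|_{L^2}^2$, and the $m=1$ bound applied with the derivative on the right (by duality) furnishes $\|\chi u\|_{L^2}\le C\|f\|_{L^2}$, delivering $\|\chi\partial_i u\|_{L^2}\le C(1+|\real\lambda|)\|f\|_{L^2}$. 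For the one-sided sub-case, use the commutator identity
\[
\chi\partial_i\partial_k u = \partial_i\partial_k(\chi u) - (\partial_i\chi)\partial_k u - (\partial_k\chi)\partial_i u - (\partial_i\partial_k\chi)u,
\]
the pointwise bounds $|\partial^\beta\chi|\le C\chi$ for $|\beta|\le 2$, and the $L^2$-boundedness of double Riesz transforms $\|\partial_i\partial_k v\|_{L^2}\le\|\Delta v\|_{L^2}$, reducing matters to $\|\Delta(\chi u)\|_{L^2}$, which via $\Delta u=Vu-\lambda^2u-\tilde\chi f$ reduces further to bounding $\|\chi Vu\|_{L^2}$.

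The main obstacle is controlling $\|\chi Vu\|_{L^2}$ in the presence of the $|x|^{-1}$-singularity of $V$ at the origin. Split $V=V\mathbf{1}_{\R^n\setminus B(0,1)}+V\mathbf{1}_{B(0,1)}$: the exterior part is handled by \eqref{for self-adjointness II} and controlled by $C\|\chi u\|_{L^2}$. For the interior part, $|V|\le C|x|^{-1}$ by \eqref{for self-adjointness}, and a localized Hardy inequality
\[
\int_{B(0,1)}\frac{|u|^2}{|x|^2}\,dx\le C_n\bigl(\|\nabla u\|_{L^2(B(0,2))}^2+\|u\|_{L^2(B(0,2))}^2\bigr),
\]
valid for $n\ge 3$ by multiplying $u$ by a cutoff and invoking classical Hardy, combines with the lower bound $\chi\gtrsim 1$ on $B(0,2)$ to give $\|\chi V\mathbf{1}_{B(0,1)}u\|_{L^2}\le C(\|\chi\nabla u\|_{L^2}+\|\chi u\|_{L^2})\le C\|f\|_{L^2}$. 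Assembling these estimates with $|\lambda|^2\|\chi u\|_{L^2}\le C(1+|\real\lambda|)\|f\|_{L^2}$ yields the desired $O(1+|\real\lambda|)$ bound; the dimensional restriction $n\ge 3$ enters precisely at the Hardy step.
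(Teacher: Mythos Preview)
Your proof is correct and follows essentially the same route as the paper: the base case comes directly from Theorem~\ref{nontrap est thm repulsive V}, the higher-order cases are obtained by pairing the equation against $\chi^2\bar u$ and integrating by parts, commutators reduce second derivatives to $\Delta(\chi u)$, and the singular part of $V$ is controlled by Hardy's inequality (which the paper cites as Lemma~\ref{faris lemma}). The only organizational difference is that you induct on $m=|\alpha_1|+|\alpha_2|$ and establish $m=1$ by a direct energy estimate, whereas the paper first proves the full $L^2\to H^2$ bound (the $|\alpha_1|=2$, $|\alpha_2|=0$ case) via an elliptic absorption argument and then obtains $m=1$ by interpolation using \eqref{std elliptic thry}; the remaining cases are handled by duality and the same energy computation you give for the mixed case.
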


\begin{proof}
Since $((P - \lambda^2)^{-1})^* = (P- (\overline{\lambda})^2)^{-1}$, to prove \eqref{lap A} is suffices to assume $\imag \lambda > 0$. 
 
First, we treat the case $|\alpha_2| = 0$. Using \eqref{nontrap est repulsive V} if $|\real \lambda|>1$ or \eqref{low freq est repulsive V} if $|\real \lambda|\leq 1$, we get
\begin{equation} \label{L2 to L2 bd Helmholtz resolvent}
  \|\langle x \rangle^{-s_1} (P- \lambda^2)^{-1}  \langle x \rangle^{-s_2} \|_{L^2 \to L^2 } \le C(1 + |\real \lambda|)^{-1}, \qquad 0 < \imag \lambda \le 1, 
\end{equation}
Recall from standard elliptic theory that for all $f \in H^2(\R^n)$ and all $\gamma > 0$,
\begin{equation} \label{std elliptic thry}
\begin{gathered}
\| f\|_{H^2} \le C( \| f\|_{L^2} +  \| \Delta f\|_{L^2}), \\
\| f\|^2_{H^1} \le C \| f\|_{L^2} \|f \|_{H^2} \le C( \gamma^{-1} \| f\|^2_{L^2} + \gamma \| \Delta f\|^2_{L^2}).
\end{gathered}
\end{equation}
Therefore, for any $f \in L^2(\R^n)$,
\begin{equation*}
\begin{split}
\| \langle x \rangle^{-s_1}& (P- \lambda^2)^{-1} \langle x \rangle^{-s_2}f\|_{H^2(\R^n)} \\
&\le C ( \| \langle x \rangle^{-s_1} (P - \lambda^2)^{-1} \langle x \rangle^{-s_2}f\|_{L^2(\R^n)} + \| (-\Delta) \langle x \rangle^{-s_1} (P - \lambda^2)^{-1} \langle x \rangle^{-s_2}f \|_{L^2(\R^n)}) \\
&\le C ( \| \langle x \rangle^{-s_1} (P- \lambda^2)^{-1} \langle x \rangle^{-s_2}f\|_{H^1(\R^n)} + \| \langle x \rangle^{-s_1} (-\Delta) (P- \lambda^2)^{-1} \langle x \rangle^{-s_2}f \|_{L^2(\R^n)}) \\
&\le C(\gamma^{-1} + |\real \lambda|^2) \| \langle x \rangle^{-s_1} (P- \lambda^2)^{-1} \langle x \rangle^{-s_2}f\|_{L^2(\R^n)} \\
&+ C\gamma \| \Delta \langle x \rangle^{-s_1} (P - \lambda^2)^{-1} \langle x \rangle^{-s_2}f\|_{L^2(\R^n)}\\
 &+ C \| f \|_{L^2(\R^n)}. 
\end{split}
\end{equation*}
Selecting $\gamma$ sufficiently small depending on $C$, and applying \eqref{L2 to L2 bd Helmholtz resolvent} yields 
\begin{equation} \label{L2 to H2 bd Helmholtz resolvent}
\| \langle x \rangle^{-s_1}(P- \lambda^2)^{-1}  \langle x \rangle^{-s_2}f\|_{H^2(\R^n)} \le C(1 + |\real \lambda|) \| f \|_{L^2(\R^n)},
\end{equation}
as desired. This confirms \eqref{lap A} for $|\alpha_1| = 2$. For $|\alpha_1| = 1$ (still with $|\alpha_2| = 0$), combine \eqref{L2 to L2 bd Helmholtz resolvent} and \eqref{L2 to H2 bd Helmholtz resolvent} via the second line of \eqref{std elliptic thry}.

If $|\alpha_2| > 0$, let $f \in  C^{\infty}_0(\R^n)$, and put $u = \langle x \rangle^{-s_1} (P - \lambda^2)^{-1} \langle x \rangle^{-s_2} \partial^{\alpha_2}_x f$. We need to show
\begin{equation*}
\| u\|_{H^{|\alpha_1|}} \le C(1 + |\real \lambda|)^{|\alpha_1| + |\alpha_2| -1} \| f\|_{L^2}, \qquad H^0 \defeq L^2(\R^n).   
\end{equation*} 
If $|\alpha_1| = 0$, we use self-adjointness and that we have already showed  $ \| \langle x \rangle^{-s_2} (P - \lambda^2)^{-1} \langle x \rangle^{-s_1} f \|_{H^j} \le C (1 + |\real \lambda|)^{j-1} \| f \|_{L^2}$, $j \in \{0,1,2\}$, to get
\begin{equation*}
\begin{split}
\| u\|^2_{L^2} &= \langle u, \langle x \rangle^{-s_1} (P - \lambda^2)^{-1} \langle x \rangle^{-s_2} \partial^{\alpha_2}_x f \rangle_{L^2} \\
&\le \| \partial^{\alpha_2}_x \langle x \rangle^{-s_2} (P - (\overline{\lambda})^2)^{-1} \langle x \rangle^{-s_1} u \|_{L^2} \| f\|_{L^2} \\
&\le C(1 + |\real \lambda|)^{|\alpha_2|  -1} \| u \|_{L^2} \| f\|_{L^2}.
\end{split}
\end{equation*}
If $|\alpha_1| = 1$, we recognize that $(P - \lambda^2)u = \langle x \rangle^{-s_1 - s_2} \partial^{\alpha_2}_x f + [-\Delta, \langle x \rangle^{-s_1}] \langle x \rangle^{s_1} u$. Then multiply by $\overline{u}$, integrate over $\R^n$, and integrate by parts as appropriate
\begin{equation*}
\begin{split}
\| \nabla u \|_{L^2}^2 &= \int (\lambda^2 - V) |u|^2 - \int \partial^{\alpha_2}_x (\langle x \rangle^{-s_1 - s_2} \overline{u}) f + \int \overline{u} [-\Delta, \langle x \rangle^{-s_1}] \langle x \rangle^{s_1} u \\
& \le \lambda^2 \int |u|^2 - \int \partial^{\alpha_2}_x (\langle x \rangle^{-s_1 - s_2} \overline{u}) f + \int \overline{u} [-\Delta, \langle x \rangle^{-s_1}] \langle x \rangle^{s_1} u.
\end{split}
\end{equation*}
Because both
\begin{equation*}
[-\Delta, \langle x \rangle^{-s_1}] \langle x \rangle^{s_1} = (-\Delta \langle x \rangle^{-s_1})  \langle x \rangle^{s_1} - 2 (\nabla \langle x \rangle^{-s_1}) \cdot \nabla \langle x \rangle^{s_1}, 
\end{equation*} 
and $\partial^{\alpha_2}_x \langle x \rangle^{-s_1 - s_2}$ are first order differential operators with bounded coefficients, we conclude, for all $\gamma > 0$,
\begin{equation*}
\begin{split}
\| \nabla u \|^2_{L^2} &\le C_\gamma ( (1 + |\real \lambda|)^2 \| u\|^2_{L^2} + \| f\|^2_{L^2} ) + \gamma \| \nabla u \|^2_{L^2} \\
&\le C_\gamma (1 + |\real \lambda|)^{2} \| f\|^2_{L^2} + \gamma \| \nabla u \|^2_{L^2},
\end{split}
\end{equation*}
for some $C_\gamma > 0$ depending on $\gamma$. 

Fixing $\gamma$ small enough, we absorb the second term on the right side into the left side, confirming \eqref{lap A} when $|\alpha_1| = |\alpha_2| = 1$.\\
\end{proof}

Next, we prove an estimate for the $\lambda$-derivative of the weighted resolvent, which requires the extra short range conditions \eqref{stronger bd V} and \eqref{delta restrictions} on the potential. As input we need the following bound for the weighted square of the free resolvent, which we prove in Appendix \ref{deriv free resolv appendix}.

\begin{lemma} \label{lap free resolv square lem}
Let $n \ge 3$. Suppose $s$ satisfies \eqref{s restrictions}. There exists $C > 0$ so that for all $\lambda \in \C$ with $0 < |\imag \lambda| \le 1$, and any  multiindex such that $|\alpha| \le 1$,
\begin{equation} \label{lap free resolv square}
\| \lambda \langle x \rangle^{-s} \partial^\alpha_x (-\Delta - \lambda^2)^{-2} \langle x \rangle^{-s}\|_{L^2(\R^n) \to L^2(\R^n)} \le C(1 + |\lambda|)^{|\alpha| -1}.
\end{equation}
\end{lemma}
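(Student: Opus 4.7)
The starting point is the operator identity
\begin{equation*}
2\lambda(-\Delta-\lambda^{2})^{-2} = \partial_\lambda(-\Delta-\lambda^{2})^{-1},
\end{equation*}
obtained by differentiating $(-\Delta-\lambda^{2})R_0(\lambda)=I$, where $R_0(\lambda)\defeq(-\Delta-\lambda^{2})^{-1}$, so the lemma reduces to bounding $\|\langle x\rangle^{-s}\nabla^{j}\partial_\lambda R_0(\lambda)\langle x\rangle^{-s}\|_{L^{2}(\R^n)\to L^{2}(\R^n)}$ by $C(1+|\lambda|)^{j-1}$. Using the Hankel representation $R_0(\lambda)(x,y)=c_n(\lambda/|x-y|)^{(n-2)/2}H^{(1)}_{(n-2)/2}(\lambda|x-y|)$ and the Bessel recursion $\partial_z[z^\nu H^{(1)}_\nu(z)]=z^\nu H^{(1)}_{\nu-1}(z)$, one computes
\begin{equation*}
\partial_\lambda R_0(\lambda)(x,y) = c_n\lambda^{(n-2)/2}|x-y|^{(4-n)/2}H^{(1)}_{(n-4)/2}(\lambda|x-y|),
\end{equation*}
so as a function of $r=|x-y|$, the kernel of $\partial_\lambda R_0(\lambda)$ is $\lambda$ times (a multiple of) the free resolvent kernel in dimension $n-2$.

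I would then split into $|\lambda|\le 1$ and $|\lambda|\ge 1$, using in both regimes the pointwise Hankel bounds $|H^{(1)}_\nu(z)|\le C|z|^{-|\nu|}$ for $|z|\le 1$ (with a logarithmic correction when $\nu\in\mathbb{Z}$) and $|H^{(1)}_\nu(z)|\le C|z|^{-1/2}e^{-\imag(z)}$ for $|z|\ge 1$. For $|\lambda|\le 1$, I would control the operator norm by the weighted Hilbert--Schmidt norm of the kernel. The restrictions in \eqref{s restrictions} arise from convergence of the weighted double integral: the tail contribution, balancing the $|\lambda|^{(n-3)/2}|x-y|^{(3-n)/2}$ behavior against the polynomial decay of the weights, forces $s>(n+3)/4$, while the diagonal singularity $|x-y|^{-(n-4)}$ (for $n\ge 5$) is $L^{2}$-integrable near $x=y$ in $\R^{n}$ only when $2(n-4)<n$, i.e., $n<8$; the borderline case $n=8$ requires the strengthened condition $s>3$ to absorb a logarithmic divergence.

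For $|\lambda|\ge 1$, I would use a Cauchy integral representation
\begin{equation*}
\partial_\lambda R_0(\lambda) = \frac{1}{2\pi i}\oint_{|z-\lambda|=\rho}\frac{R_0(z)}{(z-\lambda)^{2}}\,dz,
\end{equation*}
with $\rho>0$ chosen so that the contour lies in $\{0<|\imag z|\le 1\}$. There, Lemma \ref{lap lemma} applied with $V\equiv 0$ gives $\|\langle x\rangle^{-s}R_0(z)\langle x\rangle^{-s}\|\le C(1+|\real z|)^{-1}\le C|\lambda|^{-1}$, and Cauchy's estimate yields the desired decay. The gradient case $j=1$ is parallel: differentiating the kernel in $x$ produces either a factor of $\lambda$ (via $(H^{(1)}_\nu)'(z)=H^{(1)}_{\nu-1}(z)-(\nu/z)H^{(1)}_\nu(z)$) or a factor of $|x-y|^{-1}$ (from the prefactor), and rerunning the HS and Cauchy estimates yields the bound $C=(1+|\lambda|)^{0}$, losing one power of $(1+|\lambda|)^{-1}$ relative to $j=0$.

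The main obstacle is the borderline dimension $n=8$, where the $|x-y|^{-4}$ diagonal singularity is exactly at the $L^{2}$-Hilbert--Schmidt threshold in $\R^{8}$ and produces a logarithm that must be absorbed into the weights, yielding the condition $s>3$ rather than the generic $s>(n+3)/4=11/4$. A secondary difficulty is arranging the Cauchy contour for the high-frequency estimate so that Lemma \ref{lap lemma} applies uniformly throughout $0<|\imag\lambda|\le 1$; this may require splitting the range and handling $|\imag\lambda|$ close to $1$ by direct kernel estimates, exploiting the strong exponential decay of $e^{i\lambda r}$.
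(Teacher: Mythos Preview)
Your low-frequency argument via weighted Hilbert--Schmidt bounds on the explicit kernel matches the paper's approach and correctly traces the condition $s>(n+3)/4$ to the tail contribution. But it is incomplete for $n\ge 8$: the diagonal singularity $|x-y|^{-(n-4)}$ is not square-integrable near $x=y$ in $\R^n$ once $2(n-4)\ge n$, and enlarging $s$---which only affects the weights at infinity---cannot repair this. So your account of $n=8$ (``$s>3$ to absorb a logarithmic divergence'') is not the right mechanism, and $n>8$ is not addressed at all. The paper switches from Hilbert--Schmidt to Schur's test for $n>8$, which requires only local $L^1$-integrability of $|x-y|^{-(n-4)}$.

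Your high-frequency Cauchy integral argument has a genuine gap, and you have misdiagnosed where it lies. The unweighted resolvent $R_0(z)=(-\Delta-z^2)^{-1}$ is analytic only in $\imag z>0$ (or $\imag z<0$) separately, so for the circle $|z-\lambda|=\rho$ to stay in the domain of analyticity you must take $\rho<|\imag\lambda|$; the Cauchy estimate then yields $O(\rho^{-1}|\lambda|^{-1})$, which is not uniform as $|\imag\lambda|\to 0$. The dangerous regime is $|\imag\lambda|$ near $0$, not near $1$, and there is no exponential decay of $e^{i\lambda r}$ to exploit there. A correct version of your argument would invoke the analytic continuation of the \emph{weighted} free resolvent $\langle x\rangle^{-s}R_0(\lambda)\langle x\rangle^{-s}$ across the real axis (visible from the explicit Hankel kernel), together with uniform bounds on the continuation, allowing $\rho$ of unit size. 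The paper avoids this entirely by a different, purely algebraic route for $|\lambda|\ge 1$: it writes $-\lambda^2 R_0^2=-\tfrac12 R_0(-2\Delta)R_0+R_0$, uses the commutator identity $[r\partial_r,\Delta]=-2\Delta$ to rewrite $R_0(-2\Delta)R_0$ in terms of $r\partial_r R_0$ and $R_0\,\partial_r r$, and then applies Lemma~\ref{lap lemma} (for which $s>3/2$ suffices). Likewise, for $j=1$ the paper reduces algebraically to the $j=0$ case via $(-\Delta)R_0^2=R_0+\lambda^2R_0^2$ rather than by differentiating the kernel in $x$.
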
 

\begin{remark}
In \cite{vo04}, the estimate \eqref{lap free resolv square} is stated to hold in any dimension $n \ge 3$ provided $s > 3/2$. However, our proof of Lemma \ref{lap free resolv square lem} in dimension $n \ge 4$ needs $s$ larger if \eqref{lap free resolv square} is to hold uniformly as $|\lambda| \to 0$. In our approach, we use the integral kernel of $\lambda \langle x \rangle^{-s} (-\Delta - \lambda^2)^{-2} \langle x \rangle^{-s}$ to assess $L^2$-boundedness as $|\lambda| \to 0$. The kernel is given in terms of the Macdonald function \cite[10.27.4, 10.27.5]{dlmf} of order $(n/2) - 2$, along with other factors. We are able to conclude boundedness on $L^2(\R^n)$ for $s$ as in \eqref{s restrictions}.
\end{remark}

\begin{lemma} \label{resolv square lemma}
Let $n \ge 3$ and suppose $s$ is as in \eqref{s restrictions}. Assume $V$ obeys \eqref{nonneg} through \eqref{AC loc} and \eqref{V prime cond}, as well as \eqref{stronger bd V} and \eqref{delta restrictions}. There exists $C > 0$  so that for all $\lambda \in \C$ with  $0 < | \imag \lambda | \le 1$, and any $j \in \{0, 1\}$ and multiindex $\alpha$ such that $j + |\alpha| \le 1$,
\begin{equation} \label{lap square}
\| \tfrac{d}{d \lambda}\langle x \rangle^{-s}  \lambda^{j} \partial^\alpha_{x}   (P - \lambda^2)^{-1}   \langle x \rangle^{-s} \|_{L^2(\R^n) \to L^2(\R^n)} \le C.
\end{equation}
\end{lemma}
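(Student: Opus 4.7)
Differentiation gives
\begin{equation*}
\tfrac{d}{d\lambda}\bigl[\langle x\rangle^{-s}\lambda^{j_1}\nabla^{j_2}(P-\lambda^2)^{-1}\langle x\rangle^{-s}\bigr] = j_1 \lambda^{j_1 - 1}\langle x\rangle^{-s}\nabla^{j_2}R(\lambda)\langle x\rangle^{-s} + 2\lambda^{j_1+1}\langle x\rangle^{-s}\nabla^{j_2}R(\lambda)^2\langle x\rangle^{-s},
\end{equation*}
where $R(\lambda)\defeq (P-\lambda^2)^{-1}$. When $j_1 = 1$ (so $j_2 = 0$), the first summand is bounded by Lemma~\ref{lap lemma}, so it suffices to prove
\begin{equation*}
\|\lambda^{j_1+1}\langle x\rangle^{-s}\nabla^{j_2}R(\lambda)^2\langle x\rangle^{-s}\|_{L^2 \to L^2} \le C, \qquad j_1+j_2 \le 1.
\end{equation*}
Since this norm is non-increasing in $s$, I may replace $s$ at the outset by any $s_* > (n+3)/4$; I pick $s_*$ close enough to $(n+3)/4$ that $s_* + \tfrac{1}{2} < \delta$, which is compatible with \eqref{delta restrictions}.

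Set $R_0(\lambda)\defeq (-\Delta - \lambda^2)^{-1}$. Iterating the resolvent identities $R = R_0 - R_0 V R$ and $R = R_0 - R V R_0$ yields
\begin{equation*}
R^2 = R_0^2 - R_0^2 V R - R V R_0^2 + R V R_0^2 V R,
\end{equation*}
and I would estimate each term separately. The first term is directly controlled by Lemma~\ref{lap free resolv square lem}; multiplication by $\lambda^{j_1+1}$ and $j_1 + j_2 \le 1$ give a uniform bound. For the three remaining terms, split $V = V_{\mathrm{in}} + V_{\mathrm{out}}$ with $V_{\mathrm{in}} \defeq V\mathbf{1}_{B(0,1)}$. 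Where each copy of $V$ is $V_{\mathrm{out}}$, the short range bound \eqref{stronger bd V} makes $\langle x\rangle^{s_a}V_{\mathrm{out}}\langle x\rangle^{s_b}$ bounded whenever $s_a + s_b \le \delta$; choosing $s_b = s_*$ and $s_a$ slightly greater than $1/2$ (feasible since $s_* + 1/2 < \delta$) decomposes each such term as a product of weighted $R$- and $R_0^2$-factors controlled by Lemmas~\ref{lap lemma} and~\ref{lap free resolv square lem}, and tracking $\lambda$-powers confirms the estimate.

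The hard part will be handling the interior piece $V_{\mathrm{in}}$, which by \eqref{for self-adjointness} only obeys $|V_{\mathrm{in}}| \le Cr^{-1}\mathbf{1}_{B(0,1)}$ and so cannot absorb a weight $\langle x\rangle^{s_a}$ near the origin. I would handle this with Hardy's inequality: for a cutoff $\chi \in C_0^\infty(\R^n)$ with $\chi \equiv 1$ on $B(0,1)$ and $\supp\chi \subseteq B(0,2)$, and any $u \in H^1_{\loc}(\R^n)$,
\begin{equation*}
\|V_{\mathrm{in}} u\|_{L^2} \le C\|r^{-1}\chi u\|_{L^2} \le C\|\nabla(\chi u)\|_{L^2} \le C\bigl(\|u\|_{L^2(B(0,2))} + \|\nabla u\|_{L^2(B(0,2))}\bigr).
\end{equation*}
Applied with $u = R(\lambda)\langle x\rangle^{-s_*}f$ and Lemma~\ref{lap lemma}, this gives $\|V_{\mathrm{in}} R(\lambda)\langle x\rangle^{-s_*}f\|_{L^2} \le C\|f\|_{L^2}$; applied with $u = R_0(\lambda)^2\langle x\rangle^{-s_*}f$ and Lemma~\ref{lap free resolv square lem}, it gives $\|V_{\mathrm{in}} R_0(\lambda)^2\langle x\rangle^{-s_*}f\|_{L^2} \le C|\lambda|^{-1}\|f\|_{L^2}$. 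Because $V_{\mathrm{in}}u$ is supported in $B(0,1)$, where $\langle x\rangle^{s_*}$ is bounded, I may freely reinsert a factor $\langle x\rangle^{s_*}$ and pass the output to the next resolvent using the two-sided weighted estimates of Lemmas~\ref{lap lemma} and~\ref{lap free resolv square lem}. Combining all sub-cases in the three remaining terms, and using the $|\lambda|^{j_1+1}$ prefactor to absorb the $|\lambda|^{-1}$ arising from each application of Lemma~\ref{lap free resolv square lem}, the claimed bound follows uniformly in $\lambda$ and in $j_1 + j_2 \le 1$.
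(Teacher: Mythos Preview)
Your approach is correct and takes a genuinely different route from the paper's. The paper proves the lemma by writing the resolvent identity in the form $(P-\lambda^2)^{-1}\langle x\rangle^{-s}(I+K(\lambda)) = R_0(\lambda)\langle x\rangle^{-s}$ with $K(\lambda)=V\langle x\rangle^{s+s'}\,\langle x\rangle^{-s'}R_0(\lambda)\langle x\rangle^{-s}$, and then invests substantial effort in proving that $I+K^\pm(\lambda)$ is invertible on $L^2$ uniformly up to the real axis: this requires showing $K^\pm(\lambda)$ is compact, invoking the Fredholm alternative, and running an injectivity argument that feeds Theorem~\ref{nontrap est thm repulsive V} back in. Only after securing $\|(I+K^\pm(\lambda))^{-1}\|\le C$ does the paper differentiate the identity and bound the resulting terms via Lemmas~\ref{lap lemma} and~\ref{lap free resolv square lem}.

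You instead expand $R^2$ algebraically as the finite sum $R_0^2 - R_0^2VR - RVR_0^2 + RVR_0^2VR$ and bound each piece by combining the already-available weighted estimates of Lemmas~\ref{lap lemma} and~\ref{lap free resolv square lem} with Hardy's inequality (Lemma~\ref{faris lemma}) for the singular part $V_{\mathrm{in}}$. This is more elementary---no compactness, no Fredholm alternative, no limiting argument on the real axis---at the price of a case analysis (in/out for each copy of $V$). Your choice of $s_*$ close to the threshold and $s_a$ just above $1/2$ is exactly what makes both constraints $s_*+s_a>2$ (for Lemma~\ref{lap lemma}) and $s_*+s_a<\delta$ (so that $\langle x\rangle^{s_a}V_{\mathrm{out}}\langle x\rangle^{s_*}$ is bounded) hold simultaneously; this mirrors the paper's auxiliary exponent $s'$. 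One small correction: for $n=8$ you must take $s_*>3$ rather than $s_*>(n+3)/4=11/4$, since Lemma~\ref{lap free resolv square lem} demands it; your argument is otherwise unchanged there because $\delta>7/2$ still leaves room for $s_*+s_a<\delta$.
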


\begin{proof}
Without loss of generality, we take $s$ sufficiently close to (but larger than) $(n + 3)/4$ when $n \neq 8$, or sufficiently close to (but larger than) $3$ when $n = 8$, so that by \eqref{delta restrictions} we may fix $s' > 1/2$ so that $s + s' < \delta$. We note also that with these choices we have $s' + s > 2$, so we are permitted to apply \eqref{lap A} as the need arises.

We begin from the resolvent identity 
\begin{equation} \label{resolv id for deriv}
(P - \lambda^2)^{-1} \langle x \rangle^{-s} (I + K(\lambda)) = R_0(\lambda) \langle x \rangle^{-s}, 
\end{equation}
where $K(\lambda) \defeq V(x) \langle x \rangle^{s + s'} \langle x \rangle^{-s'} R_0(\lambda) \langle x \rangle^{-s}$ and $R_0(\lambda) \defeq (-\Delta - \lambda^2)^{-1}$.

It is well known that $\langle x \rangle^{-s'} R_0(\lambda) \langle x \rangle^{-s} : L^2(\R^n) \to H^2(\R^n)$ has a continuous extension from either half-plane ($\pm \imag \lambda > 0$) to $\R$ \cite[Proposition 2.4]{gimo74}. Let us denote this extension by $R^{\pm}_{0, s',s}(\lambda)$ and put $K^\pm(\lambda) = V(x) \langle x \rangle^{s + s'} R^{\pm}_{0, s',s}(\lambda)$ 

We now show that $K^\pm(\lambda)$ is a compact operator $L^2(\R^n) \to L^2(\R^n)$. To see this, observe that we may write $K^\pm(\lambda)$ as the sum
\begin{equation*}
K^\pm(\lambda) =  (\chi \langle x \rangle^{s + s'} V) R^{\pm}_{0, s',s}(\lambda)  + ((1 - \chi)V \langle x \rangle^{\delta}) \langle x \rangle^{s + s'- \delta}  R^{\pm}_{0, s',s}(\lambda).
\end{equation*}
where $\chi \in C^\infty_0(\R^n ; [0,1])$ is identically one near the origin in $\R^n$ and supported in $B(0,1)$. The second operator on the right side is compact by \cite[Theorem B.4]{dz}). The first operator on the right side is compact as follows: it may be viewed as the composition of bounded $R^{\pm}_{0, s',s}(\lambda) : L^2(\R^n) \to H^2(\R^n)$ followed by multiplication by $\chi \langle x \rangle^{s + s'} V$. Due to \eqref{for self-adjointness} and Lemma \ref{faris lemma}, we have $\|\chi \langle x \rangle^{s + s'} V u\|_{L^2(\R^n)} \le C \| u \|_{H^1(B(0,1))}$ for some $C > 0$ and all $u \in H^2(\R^n)$. By the Kondrachov embedding theorem the inclusion $H^2(B(0,1)) \to H^1(B(0,1))$ is compact. So compactness of $(\chi \langle x \rangle^{s + s'} V) R^{\pm}_{0, s',s}(\lambda)$ holds as desired. 

We claim further that $I + K^\pm(\lambda)$ is invertible $L^2(\R^n) \to L^2(\R^n)$ for all $\lambda$ with $\pm \imag \lambda \ge 0$. By compactness of $K^\pm(\lambda)$ and the Fredholm alternative \cite[Theorem VI.14]{reedsimon1}, we have that $I + K^\pm(\lambda)$ is invertible if we can show that for $g \in L^2(\R^n)$, $(I + K^\pm(\lambda))g = 0$ implies $g = 0$. To this end, put $u \defeq \langle x \rangle^{s'} R^{\pm}_{0, s',s}(\lambda)g$, which belongs to $\langle x \rangle^{s'} H^2(\R^n)$. If we can show $u = 0$, then in fact $g = 0$. This is because $(-\Delta - \lambda^2) u = \langle x \rangle^{-s} g$ in the distributional sense.

Now let us show $u = 0$. If $\lambda^2 \in \C \setminus [0, \infty)$ (so that $K^\pm(\lambda) = K(\lambda)$), this follows immediately from $(P - \lambda^2) u = \langle x \rangle^{-s}g + V R_0(\lambda) \langle x \rangle^{-s} g = \langle x \rangle^{-s}(I + K(\lambda))g = 0.$ If $\lambda^2 \in [0, \infty)$, the idea is the same, but we incorporate a limiting step. Set $u_{\pm, \ep} = (-\Delta - \lambda^2 \pm i \ep)^{-1} \langle x \rangle^{-s}g$. We have that \cite[Proposition 2.4]{gimo74} implies that  $\langle x \rangle^{-s'} u_{\pm, \ep}$ converges to $\langle x \rangle^{-s'}u$ in $H^2(\R^n)$ as $\ep \to 0^+$. We also have that 
\begin{equation*}
\begin{split}
u_{\pm, \ep} &= (-\Delta - \lambda^2 \pm i \ep)^{-1} \langle x \rangle^{-s}g \\
&=  (P - \lambda^2 \pm i\ep)^{-1} (P - \lambda^2 \pm i\ep) (-\Delta - \lambda^2 \pm i \ep)^{-1} \langle x \rangle^{-s}g \\
&= (P - \lambda^2 \pm i\ep)^{-1} \langle x \rangle^{-s} (I + V \langle x \rangle^{s}(-\Delta - \lambda^2 \pm i \ep)^{-1} \langle x \rangle^{-s}) g.
\end{split}
\end{equation*} 

Therefore, by \eqref{low freq est repulsive V},
\begin{equation*}
\begin{split}
\| \langle x \rangle^{-s'}u \|_{L^2} &= \lim_{\ep \to 0^+} \| \langle x \rangle^{-s'} u_{\pm, \ep} \|_{L^2} \\
&\le C \lim_{\ep \to 0^+} \| (I + V \langle x \rangle^{s}(-\Delta - \lambda^2 \pm i \ep)^{-1} \langle x \rangle^{-s}) g \|_{L^2} \\
&= \| (I + K^\pm(\lambda)) g \|_{L^2} = 0.
\end{split} 
\end{equation*}    
Thus we have demonstrated that $I + K^\pm(\lambda)$ is invertible for $\pm \imag \lambda \ge 0$. As $\lambda \to \infty$, $\|K(\lambda)\|_{L^2 \to L^2} \to 0$ thanks to \eqref{lap A}, hence we can compute $(I + K^\pm(\lambda))^{-1}$ by a Neumann series, thanks to \eqref{lap A}. Therefore
\begin{equation} \label{bd I plus K inv}
\|(I + K^\pm(\lambda))^{-1} \|_{L^2 \to L^2} \le C.
\end{equation}

Now for $0 < |\imag \lambda| \le 1$ take the $\lambda$-derivative of \eqref{resolv id for deriv},

\begin{equation} \label{deriv resolv id}
\begin{split}
\big( \tfrac{d}{d \lambda}\langle x \rangle^{-s} & \lambda^{j} \partial^\alpha_x   (P - \lambda^2)^{-1}   \langle x \rangle^{-s} \big) (I + K(\lambda)) \\
&=\tfrac{d}{d \lambda}\langle x \rangle^{-s} \lambda^{j} \partial^\alpha_x  R_0(\lambda )  \langle x \rangle^{-s}  \\
&-2 \langle x \rangle^{-s} \lambda^{j} \partial^\alpha_x   (P - \lambda^2)^{-1}   \langle x \rangle^{-s'} V \langle x \rangle^{s + s'} \lambda \langle x \rangle^{-s} (-\Delta - \lambda^2)^{-2} \langle x \rangle^{-s}, 
\end{split}
\end{equation}
where we used 
\begin{equation} \label{deriv of resolv}
\tfrac{d}{d \lambda}\langle x \rangle^{-s'}  \partial^\alpha_x R_0(\lambda )  \langle x \rangle^{-s} = 2 \lambda \langle x \rangle^{-s'} \partial^\alpha_x (-\Delta - \lambda^2)^{-2}  \langle x \rangle^{-s}, \qquad j \in \{0,1\}.
\end{equation}
The operator norm $L^2(\R^n) \to L^2(\R^n)$ of the term in the second line of  \eqref{deriv resolv id} is bounded above by a constant due to \eqref{lap free resolv square} and \eqref{deriv of resolv}. As for the third line, $\|\langle x \rangle^{-s} \lambda^{j} \partial^\alpha_x  (P - \lambda^2)^{-1}   \langle x \rangle^{-s'}\|_{L^2 \to L^2} \le C$ by \eqref{lap A}. Moreover
\begin{equation*}
\begin{split}
\| V& \langle x \rangle^{s + s'} \lambda \langle x \rangle^{-s} (-\Delta - \lambda^2)^{-2} \langle x \rangle^{-s} \|_{L^2 \to L^2} \\
& \le C \| \lambda \langle x \rangle^{-s} (-\Delta - \lambda^2)^{-2} \langle x \rangle^{-s} \|_{H^1\to L^2} 
\end{split}
\end{equation*}
since multiplication by $V \langle x \rangle^{s + s'}$ is a bounded operator $H^1(\R^n) \to L^2(\R^n)$ (see \eqref{for self-adjointness} and Lemma \ref{faris lemma}). Finally, because $\| \lambda \langle x \rangle^{-s} (-\Delta - \lambda^2)^{-2} \langle x \rangle^{-s} \|_{H^1\to L^2} \le C$ by \eqref{lap free resolv square}, the proof of \eqref{lap square} is complete.\\
\end{proof}

\section{Proof of Theorem \ref{wave decay thm}} \label{wave decay section}

In this section we prove Theorem \ref{wave decay thm} by combining the resolvent bounds of the previous section with an argument appearing in \cite[Section 3]{vo04}. As before we use the notation $P = -\Delta + V : L^2(\R^n) \to L^2(\R^n)$, $n \ge 3$, where $V$ obeys \eqref{nonneg} through \eqref{V prime cond} along with \eqref{stronger bd V} and \eqref{delta restrictions}. 

In several steps below, we use that for all $0 \le \alpha \le 1$, there exists $C > 0$ so that for any $f \in H^1(\R^n)$, 
\begin{equation} \label{Poincare etc}
\| V^\alpha f \|^2_{L^2} \le C (\| \nabla f \|^2_{L^2}  + \|f\|^2_{L^2}) \le C\| \nabla f \|^2_{L^2}.
\end{equation}
The first inequality follows from \eqref{for self-adjointness}, \eqref{stronger bd V} and Lemma \ref{faris lemma}, while the second follows from the Poincar\'e inequality (as we work in dimension $n \ge 3$). 

Given $s > 0$ and $u$ as in \eqref{soln spect thm} solving the wave equation \eqref{wave eqn srp}, with compactly supported initial conditions $u(0, x) = u_0(x) \in H^1(\R^n)$, $\partial_t u(0,x) = u_1(x) \in L^2(\R^n)$, define
\begin{equation*}
\begin{gathered}
E_s(t) \defeq \int_{\R^n} \langle x \rangle^{-2s}( |\partial_t u(t,x)|^2 + |\nabla u(t,x)|^2 + |u(t,x)|^2)dx, \\
E(0) \defeq \| \nabla u_0 \|^2_{L^2} + \| u\|^2_{L^2}. 
\end{gathered}
\end{equation*}

\begin{lemma}
If $s > 1/2$ and $V$ satisfies \eqref{nonneg} through \eqref{V prime cond}, there exists $C > 0$ so that 
\begin{equation} \label{E integrable}
\int_0^\infty E_s(\tau) d\tau \le CE(0).  
\end{equation}
If in addition $s$ satisfies \eqref{s restrictions} and $V$ \eqref{stronger bd V} and \eqref{delta restrictions}, there exists $C > 0$ so that for $t \ge 1$,
\begin{equation} \label{t minus two bd}
\int_t^\infty E_s(\tau) d\tau \le Ct^{-2} E(0).
\end{equation}
\end{lemma}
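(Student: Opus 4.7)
The plan is to execute the program outlined in \eqref{wave decay outline}. Fix $\chi \in C^\infty(\R)$ with $\chi(\tau) = 0$ for $\tau \le 0$ and $\chi(\tau) = 1$ for $\tau \ge 1$, and set $v(\tau, x) \defeq \chi(\tau) u(\tau, x)$, so that $v$ vanishes for $\tau \le 0$, agrees with $u$ for $\tau \ge 1$, and satisfies $(\partial_\tau^2 + P) v = F$ with
\begin{equation*}
F(\tau, x) \defeq \chi''(\tau) u(\tau, x) + 2\chi'(\tau) \partial_\tau u(\tau, x)
\end{equation*}
supported in $\tau \in [0, 1]$. By finite speed of propagation for \eqref{wave eqn srp}, there exists $R > 0$ (depending only on $\supp u_0 \cup \supp u_1$) such that $F(\tau, \cdot)$ is supported in $B(0, R)$; fix $\eta \in C_c^\infty(\R^n)$ equal to $1$ on $B(0, R)$ so that $F = \eta F$. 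Conservation of energy (together with \eqref{Poincare etc} to absorb $\int V |u_0|^2$) gives $\|\partial_\tau u(\tau)\|_{L^2} + \|\nabla u(\tau)\|_{L^2} \le C E(0)^{1/2}$ uniformly in $\tau$, and hence $\|F\|_{L^2_\tau L^2_x}^2 \le C E(0)$.

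Take next the Fourier transform of $v$ in $\tau$, with dual variable $\lambda$; this is made rigorous by inserting a decay factor $e^{-\ep \tau}$, working at $\Im \lambda = -\ep$, and sending $\ep \to 0^+$ using the limiting-absorption continuity of the weighted resolvent provided by Lemma \ref{lap lemma}. The outcome is
\begin{equation*}
\hat v(\lambda) = (P - \lambda^2)^{-1} \hat F(\lambda),
\end{equation*}
with $\hat F$ entire in $\lambda$ and $x$-supported in $B(0, R)$. For a multi-index $\alpha$ and $j \in \{0, 1\}$ with $|\alpha| + j \le 1$, and any $s' > \max(1/2, 2 - s)$, the factorization $\hat F = \langle x \rangle^{-s'} \cdot [\langle x \rangle^{s'} \eta \hat F]$ yields
\begin{equation*}
\langle x \rangle^{-s} \partial_x^\alpha (i\lambda)^j \hat v(\lambda) = \bigl[ \langle x \rangle^{-s} \partial_x^\alpha (i\lambda)^j (P - \lambda^2)^{-1} \langle x \rangle^{-s'} \bigr] \bigl[ \langle x \rangle^{s'} \eta \hat F(\lambda) \bigr].
\end{equation*}
Lemma \ref{lap lemma} bounds the operator factor by a constant uniformly in $\lambda \in \R$, so Plancherel in both $\tau$ and $\lambda$ gives
\begin{equation*}
\int_\R \|\langle x \rangle^{-s} \partial_x^\alpha \partial_\tau^j v(\tau)\|_{L^2}^2 d\tau \le C \int_\R \|\hat F(\lambda)\|_{L^2}^2 d\lambda = 2\pi C \|F\|_{L^2_\tau L^2_x}^2 \le C E(0).
\end{equation*}
Summing over $(|\alpha|, j) \in \{(0, 0), (0, 1), (1, 0)\}$, using $v = u$ for $\tau \ge 1$ and handling the $[0, 1]$-contribution via energy conservation and finite speed, proves \eqref{E integrable}.

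For \eqref{t minus two bd} the key observation is that multiplication by $\tau$ on the time side is dual to $i \partial_\lambda$ on the Fourier side. Thus for $t \ge 1$,
\begin{equation*}
t^2 \int_t^\infty E_s(\tau) d\tau \le \int_0^\infty \tau^2 E_s(\tau) d\tau \le C E(0) + \frac{1}{2\pi} \sum_{|\alpha| + j \le 1} \int_\R \bigl\| \partial_\lambda \bigl[ \langle x \rangle^{-s} \partial_x^\alpha (i\lambda)^j (P - \lambda^2)^{-1} \hat F(\lambda) \bigr] \bigr\|_{L^2}^2 d\lambda,
\end{equation*}
where the $C E(0)$ absorbs the $\tau \in [0, 1]$-contribution via \eqref{E integrable}. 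Expanding the $\lambda$-derivative by the product rule after factoring $\hat F = \langle x \rangle^{-s} \cdot [\langle x \rangle^s \eta \hat F]$ produces two terms: in the first the derivative lands on the resolvent expression, and Lemma \ref{resolv square lemma} bounds the associated operator norm by a constant (this is precisely where the hypotheses \eqref{stronger bd V}, \eqref{delta restrictions}, and \eqref{s restrictions} enter); in the second it lands on $\langle x \rangle^s \eta \hat F$, and Lemma \ref{lap lemma} bounds the remaining operator factor. The total is controlled by $C \int_\R ( \|\hat F\|_{L^2}^2 + \|\partial_\lambda \hat F\|_{L^2}^2 ) d\lambda$, which Plancherel identifies with $C \int_0^1 (1 + \tau^2) \|F(\tau)\|_{L^2}^2 d\tau \le C E(0)$, since $F$ is supported in $[0, 1]$. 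The main technical hurdle is the rigorous justification of these Fourier transform manipulations at $\Im \lambda = 0$; it is dispatched by the $e^{-\ep \tau}$-regularization together with the continuity (and analyticity in $\lambda$) of the weighted resolvent and its derivative afforded by Lemmas \ref{lap lemma} and \ref{resolv square lemma}.
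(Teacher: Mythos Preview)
Your proposal is correct and follows essentially the same route as the paper: cut off $u$ in time to manufacture a compactly supported forcing term, pass to the Fourier side via the $e^{-\ep\tau}$-regularized identity \eqref{FT identity}, apply Lemma~\ref{lap lemma} (for \eqref{E integrable}) and Lemma~\ref{resolv square lemma} (for \eqref{t minus two bd}) to the weighted resolvent, and return by Plancherel. The only cosmetic differences are notational (the paper's cutoff is $\phi$ and source is $v$, versus your $\chi$ and $F$), and that the paper keeps $\ep>0$ fixed through the Plancherel step and sends $\ep\to 0^+$ only at the very end, whereas you phrase the limit slightly earlier; also, when you write ``hence $\|F\|_{L^2_\tau L^2_x}^2\le CE(0)$'' you should make explicit that the $\chi'' u$ contribution is handled via finite speed plus the Poincar\'e inequality on the fixed ball $B(0,R)$, as the paper does in \eqref{apply Poincare}.
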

\begin{proof}
Choose $\phi \in C^\infty(\R)$, $\phi \ge 0$, $\phi(t) = 0$ near $(-\infty, 1/2]$, $\phi(t) = 1$ near $[1, \infty)$. Since $(\partial^2_t + P)u = 0$, where $P = -\Delta + V$, It holds that
\begin{equation} \label{v}
(\partial^2_t + P) \phi u = (\phi'' + 2 \phi' \partial_t)u \defeq v(t). 
\end{equation} 
Thus, by Duhamel's formula for the solution to an inhomogeneous wave equation with zero initial conditions, 
\begin{equation*}
\phi u(t) = \int^t_0 \frac{\sin(t - \tau)\sqrt{P}}{\sqrt{P}} v(\tau) d\tau.
\end{equation*}
On the other hand,
\begin{equation*}
(P - (\lambda - i\ep)^2)^{-1} = \int^\infty_0 e^{-it(\lambda - i\ep)} \frac{\sin(t \sqrt{P})}{\sqrt{P}} dt, \qquad \ep > 0.
\end{equation*}
It follows from the last two identities that the Fourier transform $\widehat{\phi u}$ of $\phi u$ satisfies
\begin{equation} \label{FT identity}
\widehat{\phi u}(\lambda - i\ep) \defeq \int_{-\infty}^\infty e^{-it(\lambda - i\ep)} \phi(t)u(\cdot, t) dt =  (P - (\lambda - i\ep)^2)^{-1} \hat{v}(\lambda - i\ep).
\end{equation}

By finite propagation speed for the wave equation, and because $v(t)$ is compactly supported in $t$, $\supp_{x} v(t)$, and thus also $\supp_{x} \hat{v}(\lambda)$, is contained in some compact subset of $\R^n$ independent of $t$. Choose $\eta \in C^\infty_0(\R^n)$ such that $\eta = 1$ near $\supp_x {v}(t)$ for all $t \in \R$. By \eqref{FT identity},
\begin{equation*}
\begin{gathered}
\langle x \rangle^{-s} \widehat{\phi u} (\lambda - i\ep) = \langle x \rangle^{-s}(P - (\lambda - i\ep)^2)^{-1} \eta \hat{v}(\lambda - i\ep), \\
\langle x \rangle^{-s} \widehat{\partial_t (\phi u)}(\lambda - i\ep) = \langle x \rangle^{-s} (\lambda - i\ep) (P - (\lambda - i\ep)^2)^{-1} \eta \hat{v}(\lambda - i\ep), \\
\langle x \rangle^{-s} \nabla \widehat{\phi u}(\lambda - i\ep) = \langle x \rangle^{-s} \nabla (P - (\lambda - i\ep)^2)^{-1} \eta \hat{v}(\lambda - i\ep).
\end{gathered}
\end{equation*}
Therefore, by \eqref{lap A}, for $s > 1/2$ and $V$ obeying \eqref{nonneg} through \eqref{V prime cond}, there is $C > 0$ independent of $\lambda$ and $\ep$, so that for all $\lambda \in \R$, $0 < \ep \le 1$, we have
 \begin{equation} \label{use resolv bds}
\begin{split}
\Big \| \frac{d^k}{d\lambda^k} & \langle x \rangle^{-s} \widehat{\partial_t (\phi u)}(\lambda - i\ep)  \Big \|_{L^2}  + \Big \| \frac{d^k}{d\lambda^k} \langle x \rangle^{-s} \nabla \widehat{\phi u}(\lambda - i\ep) \Big \|_{L^2}\\
&+ \Big \| \frac{d^k}{d\lambda^k} \langle x \rangle^{-s} \widehat{\phi u}(\lambda - i\ep) \Big \|_{L^2} \le C\| \hat{v}(\lambda - i\ep)\|_{L^2} + Ck\| \widehat{tv}(\lambda - i\ep)\|_{L^2}. 
\end{split}
\end{equation}
for $k = 0$. If in addition we suppose $s$ satisfies \eqref{s restrictions} and $V$ satisfies \eqref{stronger bd V} and \eqref{delta restrictions}, then by \eqref{lap square}, \eqref{use resolv bds} holds for $k \in \{0,1\}$. Note when $k = 1$ we used the product rule and the identity $\frac{d}{d\lambda}\hat{v}(\lambda-i\varepsilon)=-i\widehat{tv}(\lambda-i\varepsilon)$. 

Next, by \eqref{use resolv bds} and Plancherel's theorem, there exist $C_1, C_2, C_3, C > 0$ independent of $\ep$ so that
\begin{equation} \label{Plancherel}
\begin{split}
\int_{-\infty}^{\infty}& ( \| \langle x \rangle^{-s} \partial_t(\phi u)\|_{L^2}^2 +  \| \langle x \rangle^{-s} \nabla (\phi u)\|_{L^2}^2 +  \| \langle x \rangle^{-s} \phi u\|_{L^2}^2)e^{-2\ep t} dt \\
&= C_1\int_{-\infty}^{\infty}( \| \langle x \rangle^{-s} \widehat{\partial_t(\phi u)}(\lambda - i\ep)\|_{L^2}^2 + \| \langle x \rangle^{-s} \nabla \widehat{\phi u}(\lambda - i\ep)\|_{L^2}^2 + \| \langle x \rangle^{-s} \widehat{\phi u}(\lambda - i\ep)\|_{L^2}^2) d\lambda \\
&\le C_2 \int_{-\infty}^{\infty} \| \hat{v}(\lambda - i\ep)\|^2_{L^2} d\lambda = C_3 \int_{-\infty}^{\infty} \| v(t)\|^2_{L^2} e^{-2\ep t} dt \le C \sup_{t \in \R} \|v(t)\|^2.
\end{split}
\end{equation}
The last constant $C$ is independent of $\ep$ because $v(t)$ has compact support in $t$, see \eqref{v}. The proof of \eqref{E integrable} is completed by sending $\ep \to 0$ in \eqref{Plancherel} and observing
\begin{equation} \label{apply Poincare}
\begin{split}
\| v(t) \|_{L^2} &\le C(\| u_0 \|_{L^2} +  \| \sqrt{P} u_0\|_{L^2} + \|u_1\|_{L^2}) \\
& \le C( \| \nabla u_0\|_{L^2} + \|u_1\|_{L^2}) = C\sqrt{E(0)}.
\end{split}
\end{equation}
Between lines one and two of \eqref{apply Poincare}, we used that for any $f \in H^2(\R^n)$ (and thus any $f \in H^1(\R^n)$, since $H^2(\R^n)$ is dense in $H^1(\R^n)$),
\begin{equation*}
\begin{split}
 \| \sqrt{P} f \|^2_{L^2}  &=  \langle f, Pf \rangle_{L^2} = \| \nabla f \|^2_{L^2} + \| \sqrt{V} f\|^2_{L^2}  \le C \|\nabla f\|^2_{L^2},
 \end{split}
\end{equation*}
with the second inequality due to \eqref{Poincare etc}.

To prove \eqref{t minus two bd}, we again use Plancherel's theorem with \eqref{use resolv bds}, so that for all $0 < \ep \le 1$ and $T \ge 1$,
\begin{equation} \label{Plancherel again}
\begin{split}
T^2\int_{T}^{\infty}& ( \| \langle x \rangle^{-s} \partial_t(\phi u)\|_{L^2}^2 +  \| \langle x \rangle^{-s} \nabla (\phi u)\|_{L^2}^2 +  \| \langle x \rangle^{-s} \phi u\|_{L^2}^2)e^{-2\ep t} dt \\
& \le \int_{-\infty}^{\infty} ( \| \langle x \rangle^{-s} t \partial_t(\phi u)\|_{L^2}^2 +  \| \langle x \rangle^{-s} t \nabla (\phi u)\|_{L^2}^2 +  \| \langle x \rangle^{-s} t \phi u\|_{L^2}^2)e^{-2\ep t} dt \\
&= C_1\int_{-\infty}^{\infty} \big( \big\| \frac{d}{d\lambda}  \langle x \rangle^{-s} \widehat{\partial_t(\phi u)}(\lambda - i\ep) \big\|_{L^2}^2 + \big\| \frac{d}{d \lambda} \langle x \rangle^{-s} \nabla \widehat{\phi u}(\lambda - i\ep)\big\|_{L^2}^2 \\
&+ \big\| \frac{d}{d\lambda} \langle x \rangle^{-s} \widehat{\phi u}(\lambda - i\ep)\big\|_{L^2}^2\big) d\lambda \\
&\le C_2 \int_{-\infty}^{\infty} \| \hat{v}(\lambda - i\ep)\|^2_{L^2} + \|\widehat{tv}(\lambda - i \ep) \|^2_{L^2}) d\lambda \\
&= C_3 \int_{-\infty}^{\infty} (\| v(t)\|^2_{L^2} + \|tv(t) \|^2_{L^2}) e^{-2\ep t} dt \le C \sup_{t \in \R} \|v(t)\|^2 \le CE(0). 
\end{split}
\end{equation}
Once again sending $\ep \to 0^+$ concludes the proof of \eqref{t minus two bd}. \\
\end{proof}

The local energy decay \eqref{LED} follows from \eqref{t minus two bd} and 

\begin{lemma}
If $s > 0$ and $V \ge 0$ satisfies \eqref{for self-adjointness} and \eqref{for self-adjointness II}, there exists $C> 0$ so that for all $t \ge 1$, 
\begin{equation} \label{bd E by its integral}
E_s(t) \le C \int_t^\infty E_s(\tau) d\tau.
\end{equation}
\begin{proof}
The strategy is the same as that of \cite[Lemma 3.2]{vo04}. Computing $\frac{d}{dt} E_s(t)$, one finds
\begin{equation} \label{deriv of wtd E}
\begin{split}
\frac{d}{dt} E_s(t) &= -2 \real \int_{\R^n} \partial_r u(t,x) \overline{\partial_t u(t,x)} \partial_r \langle x \rangle^{-2s}dx\\
&+ 2 \real  \int_{\R^n} (-Vu(t,x) \overline{\partial_t u(t,x)} + u(t,x) \overline{\partial_t u(t,x)} )\langle x \rangle^{-2s} dx.
\end{split}
\end{equation}
By \eqref{Poincare etc},
\begin{equation*}
\begin{split}
\| V \langle x \rangle^{-s} u(t,x) \|_{L^2} &\le C \|\nabla \langle x \rangle^{-s} u(t,x) \|_{L^2}\\
 &\le C\|\langle x \rangle^{-s} \nabla  u(t,x) \|_{L^2} + C\| \langle x \rangle^{-s} u(t,x) \|_{L^2}.
\end{split}
\end{equation*}
 for $C> 0$ independent of $t$, and whose precise value may change between lines. Thus we can bound the right side of \eqref{deriv of wtd E} from above by Cauchy-Schwarz, 
\begin{equation*}
\begin{split}
\frac{d}{dt} E_s(t) &\le  C\| \langle x \rangle^{-s} \partial_r u(t,x) \|_{L^2} \| \langle x \rangle^{-s}  \partial_t u(t,x)\|_{L^2} + C\| V \langle x \rangle^{-s} u(t,x) \|_{L^2} \| \langle x \rangle^{-s}  \partial_t u(t,x)\|_{L^2} \\
&+  C\| \langle x \rangle^{-s} u(t,x) \|_{L^2} \| \langle x \rangle^{-s}  \partial_t u(t,x)\|_{L^2} 
\le C E_s(t). 
\end{split}
\end{equation*}
We then have, for all $T > t \ge 1$, 
\begin{equation} \label{penult est}
E_s(t) \le E_s(T) + C_s \int_t^T E_s(\tau) d \tau.
\end{equation}
From \eqref{E integrable}, we also have a sequence $T_j \to \infty$ so that $\lim_{T_j \to \infty} E_s(T_j) =0$. So setting $T = T_j$ in \eqref{penult est} and sending $T_j \to \infty$ completes the proof of \eqref{bd E by its integral}.\\
\end{proof}
\end{lemma}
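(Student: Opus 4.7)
The plan is to mimic the proof strategy of \cite[Lemma 3.2]{vo04}: establish a Gr\"onwall-type inequality by differentiating $E_s(t)$, then use \eqref{E integrable} to pick off a sequence along which $E_s$ vanishes.

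First, I would differentiate $E_s(t)$ in $t$ under the integral, using the wave equation $(\partial_t^2 + P)u=0$ to replace $\partial_t^2 u$ by $\Delta u - V u$. After integrating by parts in the $|\nabla u|^2$ term, the bulk contributions cancel and one is left with a boundary-type remainder produced by the derivative falling on the weight $\langle x\rangle^{-2s}$, plus a potential term and a lower-order $u\,\overline{\partial_t u}$ term, i.e. an identity of the shape
\begin{equation*}
\tfrac{d}{dt}E_s(t) = -2\real\int_{\R^n} \partial_r u\,\overline{\partial_t u}\,\partial_r\langle x\rangle^{-2s}\,dx + 2\real\int_{\R^n}\bigl(-Vu + u\bigr)\overline{\partial_t u}\,\langle x\rangle^{-2s}\,dx.
\end{equation*}

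Next, I would bound $|\tfrac{d}{dt}E_s(t)|$ by $CE_s(t)$. For the first term this is immediate from Cauchy--Schwarz since $|\partial_r\langle x\rangle^{-2s}|\le C\langle x\rangle^{-2s}$; for the second, the delicate piece is $V\langle x\rangle^{-s}u$, for which I would invoke \eqref{Poincare etc} applied to $f=\langle x\rangle^{-s}u(t,\cdot)$ (which is in $H^1$ for each $t$, since $u_0\in H^1$ and compactly supported data yield finite-speed-of-propagation control). This gives $\|V\langle x\rangle^{-s}u\|_{L^2}\le C(\|\langle x\rangle^{-s}\nabla u\|_{L^2}+\|\langle x\rangle^{-s}u\|_{L^2})$, and Cauchy--Schwarz with the $\partial_tu$ factor then produces $CE_s(t)$. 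Combining everything yields the differential inequality $\tfrac{d}{dt}E_s(t)\le C E_s(t)$ (actually without the sign this just gives $|\tfrac{d}{dt}E_s|\le CE_s$, which suffices).

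Integrating this inequality from $t$ to $T$ gives, for all $T>t\ge 1$,
\begin{equation*}
E_s(t) \le E_s(T) + C\int_t^T E_s(\tau)\,d\tau.
\end{equation*}
The main remaining obstacle---and the only subtle point---is to pass $T\to\infty$. Direct pointwise decay of $E_s(T)$ is not yet known; however, \eqref{E integrable} gives $\int_0^\infty E_s(\tau)d\tau<\infty$, which forces the existence of a sequence $T_j\to\infty$ with $E_s(T_j)\to 0$ (else the integral would diverge). Setting $T=T_j$ and letting $j\to\infty$ yields $E_s(t)\le C\int_t^\infty E_s(\tau)\,d\tau$, which is \eqref{bd E by its integral}. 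The only technical care needed is to justify the differentiation under the integral sign defining $E_s(t)$; this can be handled by first working with the $H^2$-regular solution obtained when $u_0\in H^2$ and $u_1\in H^1$ (via the spectral formula \eqref{soln spect thm}) and then approximating general $(u_0,u_1)\in H^1\times L^2$ by smoother compactly supported data, using the energy bound \eqref{E integrable} to pass to the limit.
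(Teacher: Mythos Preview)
Your proposal is correct and follows essentially the same route as the paper's proof: compute $\tfrac{d}{dt}E_s(t)$ via the wave equation and integration by parts, bound the $V$-term using \eqref{Poincare etc}, obtain $|\tfrac{d}{dt}E_s(t)|\le CE_s(t)$, integrate from $t$ to $T$, and pass to the limit along a sequence $T_j\to\infty$ with $E_s(T_j)\to 0$ furnished by \eqref{E integrable}. Your added remark on justifying differentiation under the integral by approximating with smoother data is a reasonable precaution not spelled out in the paper.
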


\appendix

\section{Proof of Lemma \ref{technical lemma}} \label{technical lemma appendix}

First we check that 
\begin{equation} \label{is it L1}
C^\infty_0(0, \infty) \ni \varphi \mapsto \int_{\US^{n-1}} \int_0^\infty \varphi(r) |u(r,\theta)|^2dV(r, \theta) d\theta
\end{equation}
(part of the expression in \eqref{technical dist formula}) is well defined as a distribution on $(0, \infty)$. Indeed, by \eqref{dist deriv}, for each $\theta \in \US^{n-1}$
\begin{equation*}
\int_0^\infty \varphi(r) |u(r,\theta)|^2dV(r, \theta) = -\int_0^\infty V(r,\theta) ( |u(r, \theta)|^2 \varphi(r))'dr.
\end{equation*}
Moreover, by Fubini's theorem the expression on the right side belongs to $L^1(\US^{n-1})$. Therefore the quantity in \eqref{is it L1} is well defined as a distribution on $(0,\infty)$.

Next we demonstrate that the function $r \mapsto \int_{\US^{n-1}} V^{R}(r, \theta) |u(r, \theta)|^2 d\theta$ has locally bounded variation. Suppose $[a,b] \subseteq (0, \infty)$ and $a = r_0 < r_1 < \dots r_N = b$. We have  

\begin{equation*}
\begin{split}
\sum_{k =1}^N & \Big| \int_{\US^{n-1}} V^{R}(r_k, \theta) |u(r_k, \theta)|^2 d\theta - \int_{\US^{n-1}} V^{R}(r_{k-1}, \theta) |u(r_{k-1}, \theta)|^2 d\theta \Big| \\
&\le   \int_{\US^{n-1}}  \sum_{k =1}^N |u(r_k, \theta)|^2 |V^{R}(r_k, \theta) - V^{R}(r_{k-1}, \theta)| d\theta \\
&+ \int_{\US^{n-1}} \sum_{k=1}^N |V^{R}(r_{k-1}, \theta)| | |u(r_k, \theta)|^2 - |u(r_{k-1}, \theta)|^2| d\theta. 
\end{split}
\end{equation*} 
For each $\theta$, $r \mapsto V^R(r, \theta)$ is decreasing, so 
\begin{equation*}
\begin{split}
\int_{\US^{n-1}} \sum_{k =1}^N &|u(r_k, \theta)|^2 |V^{R}(r_k, \theta) - V^{R}(r_{k-1}, \theta)| d\theta \\ &\le \|u\|^2_{L^\infty([a,b] \times \US^{n-1})} \int_{\US^{n-1}}  \sum_{k =1}^N |V^{R}(r_k, \theta) - V^{R}(r_{k-1}, \theta)| d\theta \\
 &\le \|u\|^2_{L^\infty([a,b] \times \US^{n-1})} \int_{\US^{n-1}} ( V^R(a,\theta) - V^R(b, \theta) ) d\theta \\
 &\le \omega_{n-1} \|u\|^2_{L^\infty([a,b] \times \US^{n-1})} \|V\|_{L^\infty([a,b] \times \US^{n-1})},
 \end{split}
\end{equation*}
where $\omega_n$ is the $(n-1)$-dimensional volume of $\US^{n-1}$. On the other hand, 
\begin{equation*}
\begin{split}
\sum_{k=1}^N \int_{\US^{n-1}} &|V^{R}(r_{k-1}, \theta)| | |u(r_k, \theta)|^2 - |u(r_{k-1}, \theta)|^2| d\theta \\
&\le  \|V\|_{L^\infty([a,b] \times \US^{n-1})} \sum_{k=1}^N \int_{\US^{n-1}}\int_{r_{k-1}}^{r_k} |\partial_r |u(r, \theta)|^2| dr d\theta \\
&\le \omega_{n-1}(b-a) \|V\|_{L^\infty([a,b] \times \US^{n-1})}  \|\partial_r|u|^2\|_{L^\infty([a,b] \times \US^{n-1})}.
\end{split}
\end{equation*}
Taken together, the previous two estimates show that $r \mapsto \int_{\US^{n-1}} V^{R}(r, \theta) |u(r, \theta)|^2 d\theta$ has locally bounded variation.

We finish by confirming \eqref{technical dist formula}. For any $\varphi \in C_0^\infty(0, \infty)$,
\begin{equation*}
\begin{split}
    -\int_0^\infty &\varphi'(r) \int_{\US^{n-1}} V^R(r,\theta) |u(r, \theta)|^2 d\theta dr \\ &=-\int_{\US^{n-1}} \int_0^\infty \varphi'(r)  V^R(r,\theta) |u(r, \theta)|^2 dr d\theta \\
    &= -\int_{\US^{n-1}} \int_0^\infty \varphi'(r)  V(r,\theta) |u(r, \theta)|^2 dr d\theta \\
    &= - \int_{\US^{n-1}} \int_0^\infty  ((\varphi(r) |u(r, \theta)|^2)' - \varphi(r) 2 \real(\overline{u}(r,\theta)u'(r, \theta)))V(r,\theta)dr d\theta \\
    & =   \int_{\US^{n-1}} \int_0^\infty \varphi'(r) |u(r, \theta)|^2 dV(r, \theta) d\theta \\
    &+2 \int_{\US^{n-1}} \int_0^\infty \varphi(r) \real(\overline{u}(r,\theta)u'(r, \theta)))V(r,\theta) dr d\theta,
    \end{split}
\end{equation*}
which is \eqref{technical dist formula}. Note that to get the first equals sign we used Fubini's theorem.  For the second equals sign we used that for each $\theta \in \US^{n-1}$, $V^R(\cdot,\theta) = V(\cdot, \theta)$ almost everywhere with respect to the measure $dr$. For the last equals sign we used \eqref{dist deriv}.
\section{Phragm\'en-Lindel\"of Theorem} \label{pl appendix}

In this appendix we recall the Phragm\'en Lindel\"of Theorem. Let $f(z)$ be a holomorphic function in a domain $D$ of the complex plane with boundary $\Gamma$. We say that $f(z)$ does not exceed a number $M \ge 0$ in modulus at a boundary point $\zeta \in \Gamma$ if $\limsup_{z \to \zeta, \, z \in D} |f(z)| \le M$.

\begin{theorem}[Phragm\'en Lindel\"of Theorem \cite{em}] \label{pl thm}
Suppose $E \subseteq \Gamma$, and $f$ analytic on $D$ does not exceed $M$ in modulus at any point of $\Gamma \setminus E$. Suppose also there is a function $g(z)$ with the following properties:
\begin{enumerate}
\item $g(z)$ is analytic in $D$,
\item $|g(z)| < 1$ in $D$,
\item $g(z) \neq 0$ in $D$,
\item For every $\sigma > 0$, the function $|g(z)|^\sigma |f(z)|$ does not exceed $M$ is modulus at any $\zeta \in E$.
\end{enumerate}
Under these conditions, $|f(z)| \le M$ everywhere in $D$.
\end{theorem}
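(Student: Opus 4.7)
The plan is to apply the classical multiplier technique. For each $\sigma > 0$, introduce the auxiliary function
\[
h_\sigma(z) \defeq f(z)\,g(z)^\sigma,
\]
where $g(z)^\sigma = \exp(\sigma \log g(z))$ is defined via a branch of $\log g$, available because hypothesis (3) makes $g$ nonvanishing; if $D$ is not simply connected, one works instead with the modulus $|f(z)|\,|g(z)|^\sigma$ and invokes the max principle for subharmonic functions, using that $\log|g|$ is harmonic (as $g$ is holomorphic and nonvanishing) and $|f|$ is subharmonic. Either way, the object under study is controlled by a maximum-principle-type argument on $D$.

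First I would verify that $h_\sigma$ does not exceed $M$ in modulus at every $\zeta \in \Gamma$. At $\zeta \in \Gamma \setminus E$, condition (2) gives $|g(z)|^\sigma \le 1$, which combined with the hypothesis $\limsup_{z \to \zeta}|f(z)| \le M$ yields $\limsup_{z \to \zeta}|h_\sigma(z)| \le M$. At $\zeta \in E$, the bound is provided directly by condition (4). The maximum modulus principle then implies $|h_\sigma(z)| \le M$ throughout $D$: immediate if $D$ is bounded, and obtained otherwise by exhausting $D$ via $D_R \defeq D \cap \{|z| < R\}$ and passing $R \to \infty$, with the behavior at $\infty$ (as an element of either $\Gamma \setminus E$ or $E$) controlling the truncating arcs $\{|z|=R\}\cap D$. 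Finally, letting $\sigma \to 0^+$ and using $|g(z)|^\sigma \to 1$ pointwise in $D$ (which holds since $g \ne 0$) delivers $|f(z)| \le M$ everywhere in $D$.

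The main obstacle is the unbounded case, where the max modulus principle does not apply directly and one must control $h_\sigma$ uniformly on the truncating arcs. The hypotheses on $g$ — in particular the strict bound $|g| < 1$ combined with (4) being assumed for \emph{every} $\sigma > 0$ — are tailored for exactly this purpose, but one must be careful to fix $\sigma$ first, derive the bound $|h_\sigma| \le M$ via the exhaustion, and only then take $\sigma \to 0^+$, so that the constant $M$ is preserved under the limit rather than picking up a $\sigma$-dependent error.
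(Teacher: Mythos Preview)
The paper does not prove this theorem; it is merely stated in the appendix and attributed to the Encyclopedia of Mathematics reference without argument, so there is no proof in the paper to compare yours against. Your proposal is the standard Phragm\'en--Lindel\"of argument via the auxiliary multiplier $g^\sigma$ followed by $\sigma\to 0^+$, and it is essentially correct.

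One minor remark on your handling of the unbounded case: the exhaustion by $D_R = D \cap \{|z|<R\}$ is a bit imprecise as written, since the hypothesis at $\infty$ only yields $|h_\sigma| \le M+\epsilon$ on the truncating arc $\{|z|=R\}\cap D$ for $R$ sufficiently large (not $|h_\sigma|\le M$), so one must send $R\to\infty$ and then $\epsilon\to 0$ before finally letting $\sigma\to 0^+$. A cleaner alternative is to regard $\infty$ as a point of $\Gamma$ in the Riemann sphere and invoke directly the maximum principle for subharmonic functions on arbitrary domains, which requires only the $\limsup$ condition at every boundary point including $\infty$; this avoids the exhaustion entirely.
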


\section{Density argument: proof of \eqref{resolv est in sector} and \eqref{resolv est in sector low freq}} \label{density appendix}

In this appendix, we prove \eqref{resolv est in sector} and \eqref{resolv est in sector low freq} as a consequence of

\begin{lemma}
Fix $h, \, s_1 > 0$, $0 < s_2 < 1$, and $z \in \C \setminus [0, \infty)$. Let $P(h)$ be as in \eqref{P} with $V : \R^n \to \R$ obeying \eqref{for self-adjointness} and \eqref{for self-adjointness II} (so that $P(h)$ is self-adjoint with respect to the domain $H^2(\R^n)$). Suppose there exists $C > 0$ so that for all $v \in C^\infty_0(\R^n)$,
\begin{equation} \label{wtd est appendix}
\| \langle x \rangle^{-s_1} v \|^2_{L^2} \le C \| \langle x \rangle^{s_2} (P(h) - z) v \|^2_{L^2}. 
\end{equation}  
Then
\begin{equation} \label{resolv est appendix}
\|  \langle x \rangle^{-s_1} (P(h) - z)^{-1} \langle x \rangle^{-s_2} \|_{L^2 \to L^2} \le C. 
\end{equation}

\end{lemma}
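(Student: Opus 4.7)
The plan is to fix an arbitrary $f \in L^2(\R^n)$, set $u \defeq (P(h)-z)^{-1}\langle x \rangle^{-s_2} f \in H^2(\R^n)$, and deduce $\|\langle x \rangle^{-s_1} u\|_{L^2}^2 \le C\|f\|_{L^2}^2$, which yields \eqref{resolv est appendix} after taking the supremum over $f$. Since $\langle x \rangle^{s_2}(P(h)-z)u = f$ in $L^2$, it suffices to construct a sequence $v_n \in C^\infty_0(\R^n)$ with $\langle x \rangle^{-s_1} v_n \to \langle x \rangle^{-s_1} u$ and $\langle x \rangle^{s_2}(P(h)-z) v_n \to f$ in $L^2(\R^n)$; applying \eqref{wtd est appendix} to each $v_n$ and passing to the limit then gives the desired bound.

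First I would cut off at infinity. Let $\chi \in C^\infty_0(\R^n; [0,1])$ equal one on $B(0,1)$ and vanish outside $B(0,2)$, and set $\chi_R(x) \defeq \chi(x/R)$, $u_R \defeq \chi_R u$, which lies in $H^2(\R^n)$ with compact support. A direct computation gives
\[
(P(h)-z) u_R = \chi_R (P(h)-z) u - h^2 \bigl(2\nabla \chi_R \cdot \nabla u + (\Delta \chi_R) u\bigr).
\]
The first term, multiplied by $\langle x \rangle^{s_2}$, tends to $f$ in $L^2$ by dominated convergence. The commutator corrections are supported on the annulus $\{R \le |x| \le 2R\}$, where $\langle x \rangle^{s_2}|\nabla \chi_R| = O(R^{s_2 - 1})$ and $\langle x \rangle^{s_2}|\Delta \chi_R| = O(R^{s_2 - 2})$; since $s_2 < 1$ and $u \in H^2(\R^n)$, both vanish in $L^2$ as $R \to \infty$. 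This is the decisive place where the hypothesis $0 < s_2 < 1$ enters, and it is the main obstacle I expect: the weight must grow strictly slower than what the derivatives of the cutoff beat down, otherwise the localization fails to respect the right-hand weight.

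Next I would mollify the localized function. For fixed large $R$, let $\phi_\varepsilon$ be a standard mollifier and set $v_{R,\varepsilon} \defeq \phi_\varepsilon \ast u_R \in C^\infty_0(\R^n)$. As $\varepsilon \to 0^+$, one has $v_{R,\varepsilon} \to u_R$ in $H^2(\R^n)$. Because the supports are contained in a fixed compact set on which both $\langle x \rangle^{-s_1}$ and $\langle x \rangle^{s_2}$ are bounded, this $H^2$-convergence upgrades to convergence of $\langle x \rangle^{-s_1} v_{R,\varepsilon}$ and of $\langle x \rangle^{s_2}(P(h)-z) v_{R,\varepsilon}$ in $L^2$. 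The latter requires that multiplication by $V$ be continuous $H^2(\R^n) \to L^2(\R^n)$, which follows from \eqref{for self-adjointness}, \eqref{for self-adjointness II}, and Hardy's inequality (Lemma \ref{faris lemma})---this is precisely what absorbs the possible $r^{-1}$ singularity of $V$ at the origin into the approximation.

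A diagonal extraction finally produces $v_n \in C^\infty_0(\R^n)$ with $\langle x \rangle^{-s_1} v_n \to \langle x \rangle^{-s_1} u$ and $\langle x \rangle^{s_2}(P(h)-z) v_n \to f$ in $L^2$. Passing to the limit in \eqref{wtd est appendix} applied to each $v_n$ yields $\|\langle x \rangle^{-s_1} u\|_{L^2}^2 \le C\|f\|_{L^2}^2$, completing the proof. No fine analysis of the resolvent is needed beyond the hypothesis \eqref{wtd est appendix}; the entire content is the cutoff-and-mollify density argument, whose only nontrivial requirement is the condition $s_2 < 1$ that makes the weighted commutator error decay.
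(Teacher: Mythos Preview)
Your proof is correct and follows a closely related but genuinely different route from the paper's. The paper conjugates by the weight: it sets $w \defeq \langle x \rangle^{s_2}(P(h)-z)^{-1}\langle x \rangle^{-s_2} f$, shows $w \in H^2(\R^n)$ via a commutator identity (this is where the paper uses $s_2 < 1$, to make the coefficients of $[P(h),\langle x \rangle^{s_2}]$ bounded), then picks any $v_k \in C^\infty_0$ with $v_k \to w$ in $H^2$ and applies \eqref{wtd est appendix} to $\tilde v_k \defeq \langle x \rangle^{-s_2} v_k$; convergence of the weighted right-hand side follows from the single estimate $\|\langle x \rangle^{s_2}(P(h)-z)\tilde v_k - f\|_{L^2} \le C_{z,h}\|v_k - w\|_{H^2}$. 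Your argument instead approximates $u = (P(h)-z)^{-1}\langle x \rangle^{-s_2} f$ directly, first by a spatial cutoff $\chi_R$ and then by mollification, and you spend $s_2 < 1$ on the commutator with the cutoff rather than with the weight. Your version is more hands-on and makes the role of $s_2 < 1$ perhaps more transparent; the paper's version is shorter because the single $H^2$-approximation of the weighted function replaces your two-step (cutoff plus mollify) construction and the diagonal extraction. Both arguments rely on the same boundedness of multiplication by $V$ from $H^1$ to $L^2$ coming from \eqref{for self-adjointness}, \eqref{for self-adjointness II}, and Lemma~\ref{faris lemma}.
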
 

\begin{proof}
The operator
\begin{equation*}
[P(h), \langle x \rangle^{s_2}]\langle x \rangle^{-s_2} = \left(-h^2( \Delta \langle x \rangle^{s_2}) - 2h^2 (\nabla \langle x \rangle^{s_2}) \cdot \nabla \right) \langle x \rangle^{-s_2}
\end{equation*}
is bounded $H^2(\R^n) \to L^2(\R^n)$. So, for $v \in H^2(\R^n)$ such that $\langle x \rangle^{s_2} v \in H^2(\R^n)$,
 \begin{equation}\label{Czh}
 \begin{split}
\|\langle x \rangle^{s_2}(P(h)-z)v\|_{L^2}  &\le \|(P(h)-z)\langle x \rangle^{s_2} v \|_{L^2} +  \|[P(h),\langle x \rangle^{s_2}]\langle x \rangle^{-s_2}\langle x \rangle^{s_2}v \|_{L^2}
\\& \le C_{z,h} \| \langle x \rangle^{s_2}v \|_{H^2},
\end{split} 
\end{equation}
for some constant $C_{z, h} >0$ depending on $z$ and $h$.

Given $f \in L^2(\R^n)$, the function $ u= \langle x \rangle^{s_2}(P(h)-z)^{-1}\langle x \rangle^{-s_2} f \in H^2(\R^n)$ because 
\begin{equation*}
u = (P(h) - z)^{-1} (f + w), \qquad w =  [P(h), \langle x \rangle^{s_2}]  u,
\end{equation*}
with  $ [P(h), \langle x \rangle^{s_2}] $ being bounded $L^2(\R^n) \to L^2(\R^n)$ since $s_2 < 1$.

Now, choose a sequence $v_k \in C_{0}^\infty$ such that $ v_k \to  \langle x \rangle^{s_2}(P(h)-z)^{-1}\langle x \rangle^{-s_2} f$ in $H^2(\R^n)$. Define $\tilde{v}_k \defeq \langle x \rangle^{-s_2}v_k$. Then, as $k \to \infty$,
\begin{equation*}
\begin{split}
\| \langle x \rangle^{-s_1} \tilde{v}_k - \langle x \rangle^{-s_1} (&P(h)-z)^{-1}\langle x \rangle^{-s_2}f \|_{L^2}  \\
&\le \| v_k - \langle x \rangle^{s_2} (P(h)-z)^{-1}\langle x \rangle^{-s_2}f \|_{H^2} \to 0.
\end{split}
\end{equation*}
Also, applying \eqref{Czh},
\begin{equation*}
\|\langle x \rangle^{s_2}(P(h)-z)\tilde v_k - f\|_{L^2} \le C_{z,h} \|v_k - \langle x \rangle^{s_2} (P(h)-E \pm i \varepsilon)^{-1} \langle x \rangle^{-s_2} f \|_{H^2} \to 0.
\end{equation*} 
Thus \eqref{resolv est appendix} follows by replacing $v$ by $\tilde{v}_k$ in \eqref{wtd est appendix} and sending $k \to \infty$.\\
\end{proof}

\section{Justification of Remark \ref{sharpness remark}} \label{sharpness appendix}

In the setting of Theorem  \ref{nontrap est thm repulsive V}, consider the case of $V = 0$ and $n =3$. In that scenario the integral kernel of $(P(h) - z)^{-1} = (-h^2 \Delta -z)^{-1}$ with $z \in \C \setminus [0, \infty)$ is given by 
\begin{equation*}
R_0(x,y,z) \defeq h^{-2} \frac{e^{i \tfrac{\sqrt{z}}{h}|x -y|}}{4\pi |x -y|}, \qquad \imag \sqrt{z} > 0.
\end{equation*}
We recall why having a bound like \eqref{nontrap est repulsive V} on $\langle \cdot \rangle^{-s_1}(-h^2 \Delta -z)^{-1}\langle \cdot \rangle^{-s_2}: L^2(\R^3) \to L^2(\R^3)$ requires $s_1, \, s_2 > 1/2$. 

Since the norm of an operator and its adjoint coincide, it suffices to show $s_1 > 1/2$ is necessary.  Use $\sqrt{z}$ of the form $\sqrt{z} = E + i\ep$ for $E > 0$ fixed and $\ep > 0$ tending to zero.  Then, as calculated in the proof of \cite[Theorem 3.5]{dz}, for $f \in C^\infty_0(\R^3)$,
\begin{equation} \label{outgoing asymptotic}
\begin{split}
\langle x \rangle^{-s_1} \int_{\R^3} &R_0(x,y,z) f(y) dy \\
&= h^{-2} \frac{\langle x \rangle^{-s_1}}{4\pi |x|} e^{\frac{i}{h}(E+ i\ep)|x|} (\hat{f} \big( \tfrac{E}{h} \tfrac{x}{|x|} \big) + o(1)) + O(|x|^{-2}), \quad \text{as $\ep \to 0^+$ and $|x| \to \infty$.}
\end{split}
\end{equation}
If $f$ is chosen so that $|\hat{f}| > c$ for some $c > 0$ on $\{|x| = E/h\}$, \eqref{outgoing asymptotic} and $s_1 \le 1/2$ imply\\
 $\| \langle x \rangle^{-s_1} \textstyle\int_{\R^3} R_0(x,y,z) f(y) dy  \|_{L^2} \to \infty$ as $\ep \to 0^+$.

Next, supposing $s_1, \, s_2 > 1/2$, we show why a bound like \eqref{low freq est repulsive V}  on $\langle \cdot \rangle^{-s_1}(-h^2 \Delta -z)^{-1}\langle \cdot \rangle^{-s_2}: L^2(\R^3) \to L^2(\R^3)$ requires additionally that $s_1 + s_2 \ge 2$, which is nearly the condition we impose for \eqref{low freq est repulsive V}. Using $\sqrt{z} = i \ep$ for $\ep > 0$ tending to zero, and $f_\eta(y) = \langle y \rangle^{-\eta- \frac{3}{2}}$, $\eta > 0$, we see as in \cite[Proof of Remark 2]{boha10} that
\begin{equation*}
\begin{split}
 \langle x \rangle^{-s_1} \int_{\R^3} &R_0(x,y,z) \langle y \rangle^{-s_2} f(y) \\
 &= h^{-2} \langle x \rangle^{-s_1} \int_{\R^3} \frac{e^{-\frac{\ep}{h}|x-y|}}{4\pi |x- y|} \langle y \rangle^{-s_2} f(y) dy \\
 &\gtrsim  h^{-2} e^{-\frac{3\ep}{2h} |x|} \langle x \rangle^{-s_1-1} \int_{|y| \le \frac{|x|}{2}}  \langle y \rangle^{-s_2- \eta - \frac{3}{2}} dy \gtrsim h^{-2} e^{-\frac{3\ep}{2h} |x|}  \langle x \rangle^{-s_1 - s_2- \eta + \frac{1}{2}}, 
 \end{split}
\end{equation*}
where the implicit constants indicated by $\gtrsim$ are independent of $\ep$ and $\eta$. First sending $\ep \to 0^+$ gives $s_1 + s_2 \ge 2 -\eta$,  but since $\eta > 0$ is arbitrary, we in turn get $s_1 + s_2 \ge 2$.

To see that the $O(|z|^{-\frac{1}{2}} h^{-1})$-dependence of the right side of \eqref{nontrap est repulsive V} is optimal, consider the function $u = e^{i \frac{\sqrt{z}}{h} x_1} \chi$ for nontrivial $\chi \in C^\infty_0(\R^3 ; [0, 1])$. We have
\begin{equation*}
\langle x \rangle^{s}(-h^2 \Delta -z)u = -i \sqrt{z} h \langle x \rangle^{s} \partial_{x_1} \chi - h^2 e^{i \frac{\sqrt{z}}{h}x_1} \langle x \rangle^s \Delta \chi \qefed f,
\end{equation*}
whence $\langle \cdot \rangle^{-s} (-h^2 \Delta - z)^{-1} \langle \cdot \rangle^{-s} f = \langle \cdot \rangle^{-s}u$ and thus, as $h \to 0$,
\begin{equation*}
\frac{\|\langle \cdot \rangle^{-s} (-h^2 \Delta - z)^{-1} \langle \cdot \rangle^{-s} f\|_{L^2}}{\|f \|_{L^2} } = \frac{\| \langle \cdot \rangle^{-s} u\|_{L^2}}{\|f \|_{L^2} } \gtrsim |z|^{-\frac{1}{2}} h^{-1}.
\end{equation*}

Finally, we argue why the $O(h^{-2})$-dependence of the right side of \eqref{low freq est repulsive V} is sharp. As noted before \cite[Proposition 2.4]{gimo74} gives that $R_{0, s_1, s_2}(\lambda) = \langle \cdot \rangle^{-s_1}(-h^2 \Delta -\lambda^2)^{-1}\langle \cdot \rangle^{-s_2}$ ($s_1,\, s_2 > 1/2$, $s_1 + s_2 > 2$), extends continuously from $\imag \lambda > 0$ to $\R$ in the space of bounded opeartors $\R^3 \to \R^3$. In this case, we have,
\begin{equation*}
h^{-2}\big\| \langle x \rangle^{-s_1} \int_{\R^3} \frac{1}{4\pi |x -y|}\langle y \rangle^{-s_2}dy \big \|_{L^2 \to L^2} = \lim_{\ep \to 0} \| R_{0, s_1, s_2}(i\ep)\|_{L^2 \to L^2}.
\end{equation*}
 
\section{Proof of Lemma \ref{lap free resolv square lem}} \label{deriv free resolv appendix}

In this appendix we prove Lemma \ref{lap free resolv square lem}. The proof proceeds in two steps. First we treat the case $|\lambda| \ge 1$, followed by $|\lambda| \le 1$. 
\begin{proof}[Proof of Lemma \ref{lap free resolv square lem}]
Initially we take $|\alpha| = 0$ in \eqref{lap free resolv square}, so may assume without loss of generality that $\imag \lambda > 0$. We treat the $|\alpha| = 1$ case at the end of the proof. Observe that $\tfrac{d}{d\lambda} \langle x \rangle^{-s} (-\Delta - \lambda^2)^{-1} \langle x \rangle^{-s} = 2\lambda \langle x \rangle^{-s} (-\Delta - \lambda^2)^{-2} \langle x \rangle^{-s} $, so we can bound the $L^2(\R^n) \to L^2(\R^n)$ norm of either quantity.  

If $|\lambda| \ge 1 $, begin from
\begin{equation} \label{identity to insert Laplacian}
\begin{split}
-\lambda^2 \langle x \rangle^{-s} (-\Delta - \lambda^2)^{-2} \langle x \rangle^{-s} &= - \tfrac{1}{2} \langle x \rangle^{-s} (-\Delta - \lambda^2)^{-1}(-2\Delta) (-\Delta - \lambda^2)^{-1} \langle x \rangle^{-s}  \\
&+\langle x \rangle^{-s} (-\Delta - \lambda^2)^{-1} \langle x \rangle^{-s}.
\end{split}
\end{equation}
By \eqref{lap A}, the $L^2(\R^n) \to L^2(\R^n)$ norm of the second line of  \eqref{identity to insert Laplacian} is bounded by $C(1 + | \real \lambda|)^{-1}$. So it suffices to investigate the term on the right side of the first line of \eqref{identity to insert Laplacian}. For notational brevity, put $R_0(\lambda) \defeq (-\Delta - \lambda^2)^{-1}$. We show that for all $f \in C^\infty_0(\R^n)$,
\begin{equation} \label{A inv Delta A inv}
\begin{split}
 \langle x \rangle^{-s}  &R_0(\lambda)(-2 \Delta)  R_0(\lambda) \langle x \rangle^{-s} f \\
 &= -\langle x \rangle^{-s} R_0(\lambda) \partial_r (r  \langle x \rangle^{-s} f) + \langle x \rangle^{-s} R_0(\lambda)  \langle x \rangle^{-s}f \\
 &+  \langle x \rangle^{-s} r  \partial_r R_0(\lambda)  \langle x \rangle^{-s}f 
\end{split} 
\end{equation}
Since $s > 3/2$ by \eqref{s restrictions}, the $L^2(\R^n)$-norm of the right side of \eqref{A inv Delta A inv} is bounded by $C \| f\|_{L^2}$, thanks to \eqref{lap A}. So it remains to show \eqref{A inv Delta A inv}.

Recall the well known formula for the Laplacian in polar coordinates,
\begin{equation*}
\Delta = \partial^2_r + (n-1)r^{-1} \partial_r + r^{-2} \Delta_{\US^{n-1}}, 
\end{equation*}
which implies the commutator identity
\begin{equation} \label{commutator identity}
[r \partial_r, \Delta] \defeq r \partial_r (\Delta) - \Delta(r \partial_r) = -2 \Delta. 
\end{equation}
Fix $f \in C^\infty_0(\R^n)$, $g \in L^2(\R^n)$, and put $u \defeq R_0(\lambda) \langle x \rangle^{-s} f \in H^2(\R^n)$. Let $\{u_k\}_{k=1}^\infty \subseteq C^\infty_0(\R^n)$ be a sequence converging to $u$ in $H^2(\R^n)$. Starting from the left side of \eqref{A inv Delta A inv} and applying \eqref{commutator identity},
\begin{equation} \label{long calc 1}
\begin{split}
\langle g, \langle x \rangle^{-s}  R_0(\lambda)(-2 \Delta)  &R_0(\lambda) \langle x \rangle^{-s}f \rangle_{L^2} \\
&= \lim_{k \to \infty} \langle g,  \langle x \rangle^{-s}  R_0(\lambda)[r\partial_r, \Delta]  u_k \rangle_{L^2}\\
&= \langle g, \langle x \rangle^{-s} r \partial_r u \rangle_{L^2} \\
&- \lim_{k \to \infty}  \langle g, \langle x \rangle^{-s}  R_0(\lambda) r \partial_r (-\Delta - \lambda^2) u_k\rangle_{L^2}.
\end{split}
\end{equation}

The purpose of the following calculations is to show that the last line of \eqref{long calc 1} equals\\$-\langle  g, \langle x \rangle^{-s} R_0(\lambda) r \partial_r \langle x \rangle^{-s}   f \rangle_{L^2}$. First, for any $v \in L^2(\R^n)$, $r R_0(\lambda) \langle x \rangle^{-1} v  \in H^1(\R^n)$. This holds because, if we put $w \defeq \langle x \rangle R_0(\lambda) \langle x \rangle^{-1} v$, then $ r R_0(\lambda) \langle x \rangle^{-1} v = r \langle x \rangle^{-1} w$ and 
\begin{equation*}
\begin{gathered}
(-\Delta - \lambda^2) w = [- \Delta , \langle x \rangle] R_0(\lambda) \langle x \rangle^{-1} v + v \implies \\
w = R_0(\lambda)([- \Delta , \langle x \rangle]  R_0(\lambda) \langle x \rangle^{-1} v + v) \in H^2(\R). 
\end{gathered}
\end{equation*}
Furthermore, for any $w, v \in C^\infty_0(\R^n)$, 
\begin{equation*}
\langle w, \partial_r v \rangle_{L^2}  = \langle \partial_r^* w,  v \rangle_{L^2} \defeq (1-n) \langle r^{-1} w, v \rangle_{L^2} - \langle \partial_r w, v \rangle_{L^2}.
\end{equation*}
Therefore, by the density of $C^\infty_0(\R^n)$ in $H^1(\R^n)$, and setting $\tilde{u}_{k} = (-\Delta - \lambda) u_k$, we get
  \begin{equation} \label{long calc 2}
  \begin{split}
\lim_{k \to \infty}  \langle g, \langle& x \rangle^{-s}  R_0(\lambda) (r \partial_r) \tilde{u}_k  \rangle_{L^2}\\
    &= \lim_{k \to \infty}    \langle (\partial_r)^* r R_0(\overline{\lambda}) \langle x \rangle^{-s}  g,   \tilde{u}_k \rangle_{L^2}\\
 &=   \langle  (\partial_r)^* r  R_0(\overline{\lambda}) \langle x \rangle^{-s}  g, \langle x \rangle^{-s}  f \rangle_{L^2}\\
 &= \langle  g, \langle x \rangle^{-s} R_0(\lambda) r \partial_r \langle x \rangle^{-s}   f \rangle_{L^2}.
 \end{split} 
\end{equation}
as desired. Taken together, \eqref{long calc 1} and \eqref{long calc 2} confirm \eqref{A inv Delta A inv}. 

Now we turn to the case $|\lambda | \le 1$, and utilize the integral kernel of the free resolvent, which is given by \cite[Section 3]{jene01},
\begin{equation}
    (-\Delta-\lambda^2)^{-1}(|x-y|)=\frac{1}{2\pi}\left(\frac{-i\lambda}{2\pi|x-y|}\right)^{\frac{n}{2}-1}K_{\frac{n}{2}-1}(-i\lambda|x-y|), \qquad \imag \lambda > 0,
\end{equation}
where $K_\nu(z)$ is the Macdonald function of order $\nu$ \cite[10.27.4, 10.27.5]{dlmf}. Now, if $n=3$, then the integral kernel of $\langle x\rangle^{-s}\frac{d}{d\lambda}(-\Delta-\lambda^2)^{-1}\langle x\rangle^{-s}$ is given by $i(4\pi)^{-1}\langle x\rangle^{-s}e^{i\lambda|x-y|}\langle y\rangle^{-s}$, which has Hilbert-Schmidt norm bounded uniformly in $|\lambda| \le 1$ provided $s>3/2.$ Moving on to $n \ge 4$, by \cite[10.29.2]{dlmf},
\begin{equation*}
    \frac{d}{d\lambda}\left(\frac{-i\lambda}{2\pi|x-y|}\right)^{\frac{n}{2}-1}K_{\frac{n}{2}-1}(-i\lambda|x-y|)=\frac{-(-i)^{\frac{n}{2}}\lambda^{\frac{n}{2}-1}}{(2\pi)^{\frac{n}{2}-1}|x-y|^{\frac{n}{2}-2}}K_{\frac{n}{2}-2}(-i\lambda|x-y|).
\end{equation*}
The Macdonald function satisfies \cite[10.25.3, 10.30.2, 10.30.3]{dlmf}
\begin{equation}
  |K_{\nu}(z)| \le \begin{cases}
   C|z|^{-\nu} & 0 < |z| \le 1, \, \nu > 0, \\
  C |\ln |z|| & 0 < |z| \le 1, \, \nu = 0, \\
   C|z|^{-1/2} &  |z| \ge 1, \,  \real z \ge 0, \, \nu \ge 0,
    \end{cases}
\end{equation}
for $C > 0$ a constant independent of $z$.
Therefore, for $C > 0$ independent of $\lambda$,
\begin{equation} \label{bd deriv kernel low freq}
\begin{split}
\Big| &\frac{\lambda^{\frac{n}{2}-1} \langle x \rangle^{-s} \langle y \rangle^{-s}}{|x-y|^{\frac{n}{2}-2}}K_{\frac{n}{2}-2}(-i\lambda|x-y|)\Big| \\
&\le \begin{cases}
C |\lambda| \langle x \rangle^{-s} \langle y \rangle^{-s} |\ln (|\lambda||x-y|)| \mathbf{1}_{\{|\lambda||x-y|\leq 1\}}+C\frac{|\lambda|^{\frac{n}{2}-\frac{3}{2}} \langle x \rangle^{-s} \langle y \rangle^{-s}}{|x-y|^{\frac{n}{2}- \frac{3}{2}}}  \mathbf{1}_{\{|\lambda||x-y|> 1\}}  & n = 4, \\
C\frac{|\lambda|\langle x \rangle^{-s} \langle y \rangle^{-s}}{|x-y|^{n-4}} \mathbf{1}_{\{|\lambda||x-y|\leq 1\}}+C\frac{|\lambda|^{\frac{n}{2}-\frac{3}{2}}\langle x \rangle^{-s} \langle y \rangle^{-s}}{|x-y|^{\frac{n}{2}- \frac{3}{2}}}  \mathbf{1}_{\{|\lambda||x-y|> 1\}} & n > 4. 
\end{cases}
 \end{split}
\end{equation}
As preparation for the conclusions we draw in the next paragraph, we observe that the first term in line three of \eqref{bd deriv kernel low freq} has the bound, for $|\lambda| \le 1$ and $0 < \alpha \le 1$,
\begin{equation} \label{HS trick}
\frac{|\lambda|\langle x \rangle^{-s} \langle y \rangle^{-s}}{|x-y|^{n-4}} \mathbf{1}_{\{|\lambda||x-y|\leq 1\}} = \frac{|\lambda| |x-y|^{\alpha}\langle x \rangle^{-s} \langle y \rangle^{-s}}{|x-y|^{n-4 + \alpha}} \mathbf{1}_{\{|\lambda||x-y|\leq 1\}} \le \frac{\langle x \rangle^{-s} \langle y \rangle^{-s}}{|x-y|^{n-4 + \alpha}} \mathbf{1}_{\{|\lambda||x-y|\leq 1\}}. 
\end{equation}

In what follows we make repeated use of Lemma \ref{HS lemma} to verify that a given kernel is Hilbert-Schmidt. In \eqref{bd deriv kernel low freq}, the second term in line two and the second term in line three are are uniformly bounded in Hilbert-Schmidt norm for $|\lambda| \le 1$, provided $s > (n + 3)/4$. This also holds for the first term in line two of \eqref{bd deriv kernel low freq} if $s > 3/2$. In addition, to address the first term in line three of \eqref{bd deriv kernel low freq}, we utilize \eqref{HS trick} in combination with Lemma \ref{HS lemma}. Taken together they give that  $\langle x \rangle^{-s} \langle y \rangle^{-s}|x-y|^{-n+4 - \alpha}$ is Hilbert-Schmidt 
\begin{equation*}
\begin{gathered}
\text{when $n = 5$ if $\alpha = 1$ and $s > 3/2$,}\\
\text{when $n = 6$ if $0 < \alpha < 1$ and $s > 2 - (\alpha/2)$, and}\\
\text{when $n =7$ if $0 <\alpha < 1/2$ and $s > 2 - (\alpha/2)$.}
\end{gathered}
\end{equation*}
In particular, when $n = 6$ it is enough to take $s > 3/2$, while when $n = 7$, $s > 7/4$ suffices.

Finally, if $n \ge 8$, the first term in line three is uniformly bounded $L^2(\R^n) \to L^2(\R^n)$ for $|\lambda| \le 1$ provided $s > 3$. This is due to \eqref{HS trick} with $\alpha = 1$ and the Schur test, see Lemma \ref{schur lemma}.

We finish by resolving the $|\alpha| = 1$ case for \eqref{lap free resolv square}. By \eqref{lap free resolv square} in the $|\alpha| = 0$ case, and by \eqref{std elliptic thry}, we need to show $\| \lambda \langle x \rangle^{-s}(-\Delta - \lambda^2)^{-2}  \langle x \rangle^{-s} f \|_{H^2} \le O(1 + |\lambda|) \| f\|_{L^2}$. According to \eqref{recast L2 to H2 bd} below, 
\begin{equation*}
\begin{split}
 \| \lambda& \langle x \rangle^{-s}(-\Delta - \lambda^2)^{-2}  \langle x \rangle^{-s} f \|_{H^2}  \\
 &\le C \| \lambda \langle x \rangle^{-s} (-\Delta - \lambda^2)^{-2}  \langle x \rangle^{-s} f \|_{L^2} +   C\| \lambda \langle x \rangle^{-s}( -\Delta) (-\Delta - \lambda^2)^{-2}  \langle x \rangle^{-s} f \|_{L^2}\\
&= C \| f\|_{L^2} +  C\| \lambda \langle x \rangle^{-s}( -\Delta) (-\Delta - \lambda^2)^{-2}  \langle x \rangle^{-s} f \|_{L^2}.
\end{split}
\end{equation*}
Then use 
\begin{equation*}
\begin{split}
\langle x \rangle^{-s}&( -\Delta) (-\Delta - \lambda^2)^{-2}  \langle x \rangle^{-s} f \\
&= \langle x \rangle^{-s} (-\Delta - \lambda^2)^{-1}  \langle x \rangle^{-s} f + \lambda^2 \langle x \rangle^{-s} (-\Delta - \lambda^2)^{-2}  \langle x \rangle^{-s}f,
\end{split}
\end{equation*}
which in combination with \eqref{lap A}, as well as \eqref{lap free resolv square} in the $|\alpha| = 0$ case yields
\begin{equation*}
 \| \lambda \langle x \rangle^{-s}( -\Delta) (-\Delta - \lambda^2)^{-2}  \langle x \rangle^{-s} f \|_{L^2} \le C(1 + |\lambda|) \| f_{L^2},
\end{equation*}
completing the proof. \\
\end{proof}

\section{Useful lemmas}

\begin{lemma}[{\cite[Proposition 6]{fa67}}] \label{faris lemma}
Let $n \ge 3$. Then,
\begin{equation*}
\| r^{-1} u \|^2_{L^2} \le \Big( \frac{2}{n -2} \Big)^2 \| \nabla u \|^2_{L^2}, \qquad u \in H^1(\R^n). 
\end{equation*} 
\end{lemma}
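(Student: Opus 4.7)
The plan is to prove the classical Hardy inequality by integration by parts, followed by a density argument.

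First I would establish the inequality for test functions $u \in C^\infty_0(\R^n \setminus \{0\})$. The key identity is
\begin{equation*}
\nabla \cdot \left( \frac{x}{r^2} \right) = \frac{n}{r^2} - \frac{2 x \cdot x}{r^4} = \frac{n-2}{r^2}.
\end{equation*}
Because $u$ vanishes near the origin and at infinity, there are no boundary terms when I integrate by parts:
\begin{equation*}
(n-2) \int_{\R^n} \frac{|u|^2}{r^2} \, dx = \int_{\R^n} |u|^2 \, \nabla \cdot \left( \frac{x}{r^2} \right) dx = -2 \real \int_{\R^n} \frac{x \cdot \nabla u}{r^2} \, \overline{u} \, dx.
\end{equation*}
Using $|x \cdot \nabla u| \le r |\nabla u|$ pointwise and the Cauchy–Schwarz inequality,
\begin{equation*}
(n-2) \| r^{-1} u \|^2_{L^2} \le 2 \int_{\R^n} |\nabla u| \, \frac{|u|}{r} \, dx \le 2 \, \| \nabla u \|_{L^2} \, \| r^{-1} u \|_{L^2},
\end{equation*}
and dividing by $\| r^{-1} u \|_{L^2}$ (the inequality is trivial if this vanishes) gives the claim on $C^\infty_0(\R^n \setminus \{0\})$.

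To promote to $u \in H^1(\R^n)$, I would show $C^\infty_0(\R^n \setminus \{0\})$ is dense in $H^1(\R^n)$ when $n \ge 3$. Starting with $u \in C^\infty_0(\R^n)$ (which is dense in $H^1$), pick $\chi \in C^\infty(\R ; [0,1])$ equal to $0$ on $(-\infty, 1]$ and $1$ on $[2, \infty)$, and set $\chi_\ep(x) \defeq \chi(|x|/\ep)$. Then $\chi_\ep u \in C^\infty_0(\R^n \setminus \{0\})$, and
\begin{equation*}
\| (1 - \chi_\ep) u \|^2_{L^2} + \| \nabla((1 - \chi_\ep) u) \|^2_{L^2} \le C \left( \int_{|x| \le 2\ep} |u|^2 + |\nabla u|^2 \, dx + \ep^{-2} \int_{\ep \le |x| \le 2\ep} |u|^2 \, dx \right),
\end{equation*}
which tends to $0$ as $\ep \to 0^+$ because $n \ge 3$ makes the last term $O(\ep^{n-2}) \|u\|^2_{L^\infty} \to 0$. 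Applying the inequality to the approximants $\chi_\ep u_k$ for $u_k \to u$ in $H^1$ and passing to the limit (using Fatou on the left and norm convergence on the right) completes the proof.

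The main obstacle is mild and concerns only the density step: one must ensure the cut-off $1-\chi_\ep$ truly disappears in $H^1$, which is exactly where the hypothesis $n \ge 3$ enters (in $n = 2$ the same argument with a logarithmic cut-off still works, but the resulting Hardy constant blows up; in $n = 1$ the inequality fails, consistent with $C^\infty_0(\R \setminus \{0\})$ not being dense in $H^1(\R)$). The integration-by-parts step itself is a one-line computation once the divergence identity is noticed.
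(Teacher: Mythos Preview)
Your proof is correct; this is the standard integration-by-parts derivation of Hardy's inequality, and the density argument via excision of the origin is handled properly (the $\ep^{n-2}$ bound is exactly where $n \ge 3$ enters). The paper itself gives no proof of this lemma---it simply cites Faris \cite{fa67}---so there is no ``paper's approach'' to compare against; your argument is the expected one.
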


\begin{lemma}[Schur's test {\cite[Section A.5]{dz}}] \label{schur lemma}
Suppose that $K(x,y)$ is measurable on $\R^n \times \R^n$ and 
\begin{equation*}
\sup_x \int |K(x,y)| dy, \, \sup_y \int |K(x,y)| dx \le C. 
\end{equation*}
Then the linear operator
\begin{equation*}
Tf(x) = \int K(x,y) f(y) dy,
\end{equation*}
obeys the estimate 
\begin{equation*}
\| T f \|_{L^2} \le C \|f \|_{L^2}.
\end{equation*}
\end{lemma}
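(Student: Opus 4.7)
The plan is to give the classical one-line proof of Schur's test via a symmetric application of the Cauchy--Schwarz inequality, where the kernel is split as $|K(x,y)| = |K(x,y)|^{1/2} \cdot |K(x,y)|^{1/2}$ so that one copy pairs with $|f(y)|$ and the other supplies the weight needed to control the inner integral uniformly in $x$.

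More precisely, I would start from the pointwise estimate
\begin{equation*}
|Tf(x)|^2 \le \left( \int |K(x,y)|^{1/2} \cdot |K(x,y)|^{1/2} |f(y)| \, dy \right)^2 \le \left( \int |K(x,y)| \, dy \right) \left( \int |K(x,y)| |f(y)|^2 \, dy \right),
\end{equation*}
obtained by Cauchy--Schwarz. The hypothesis $\sup_x \int |K(x,y)| \, dy \le C$ bounds the first factor on the right by $C$, giving $|Tf(x)|^2 \le C \int |K(x,y)| |f(y)|^2 \, dy$.

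Next I would integrate in $x$, interchange the order of integration (justified by Tonelli, since the integrand is nonnegative and measurable), and use the companion hypothesis $\sup_y \int |K(x,y)| \, dx \le C$:
\begin{equation*}
\|Tf\|_{L^2}^2 \le C \int \int |K(x,y)| |f(y)|^2 \, dx \, dy = C \int |f(y)|^2 \left( \int |K(x,y)| \, dx \right) dy \le C^2 \|f\|_{L^2}^2.
\end{equation*}
Taking square roots yields the claim. There is no real obstacle here; the only minor care needed is to note measurability of $Tf$ (from Fubini applied to $|K(x,y)| |f(y)|$, which is integrable on $\R^n \times \R^n$ by the same computation) so that the $L^2$-norm is well defined before the estimate is carried out.
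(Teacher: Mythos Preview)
Your proof is correct; this is exactly the classical Cauchy--Schwarz argument for Schur's test. The paper does not actually supply a proof of this lemma---it merely states the result and cites \cite[Section~A.5]{dz}---so there is nothing further to compare.
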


\begin{lemma}[{\cite{pe24}}] \label{HS lemma} The necessary and sufficient conditions for 
\begin{equation*}
\int_{\R^n} \int_{\R^n} \langle x \rangle^{-s} \langle y \rangle^{-t} |x -y|^{-p} dx dy < \infty,
\end{equation*}
are 
\begin{equation*}
s + p > n, \quad t + p > n, \quad s + p + t > 2n, \quad p < n. 
\end{equation*}
\end{lemma}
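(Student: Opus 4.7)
The plan is to decompose $\R^n \times \R^n$ into regions based on the relative sizes of $|x|$, $|y|$, and $|x-y|$, and to analyze the integral on each region separately, establishing sufficiency by direct estimation and necessity by exhibiting a witnessing subregion for each inequality.

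For sufficiency, I would partition the domain into: (a) $\{|x|, |y| \le 2\}$; (b) the union of $\{|x| \le 2, |y| \ge 2\}$ with its mirror image; and (c) $\{|x|, |y| \ge 2\}$, which I would subdivide into (c1) $\{|y| \le |x|/2\}$ (so $|x-y| \sim |x|$), (c2) $\{|y| \ge 2|x|\}$ (so $|x-y| \sim |y|$), and (c3) $\{|x|/2 \le |y| \le 2|x|\}$. On (a) the change of variable $z = x - y$ reduces the integral to $\int_{|z| \le 4} |z|^{-p}\,dz$ up to a bounded factor, convergent iff $p < n$. On (b) and its mirror, the bounded variable integrates out and the remaining integrals are comparable to $\int_{|y| \ge 2} |y|^{-t-p}\,dy$ and $\int_{|x| \ge 2} |x|^{-s-p}\,dx$, convergent iff $t+p > n$ and $s+p > n$, respectively. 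On (c1), the inner $y$-integral contributes at worst $O(|x|^{n-t})$ when $t < n$ (and $O(1)$ when $t > n$), yielding an outer integral $\int |x|^{n-s-t-p}\,dx$ which demands $s + p + t > 2n$; (c2) is symmetric. On (c3), a further dyadic decomposition in $|x-y| \sim 2^\ell$ with $0 \le \ell \le O(\log|x|)$ again produces the condition $s+p+t > 2n$.

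For necessity, I would exhibit for each inequality a subregion on which failure of that inequality forces divergence: region (a) for $p < n$; the subregion of (b) with $|y| \le 1$, $|x| \ge 2$, where the integrand is $\sim |x|^{-s-p}$ and the $y$-integral is $O(1)$, for $s+p > n$; its mirror for $t+p > n$; and a conic subregion of (c3) around the direction $y = -x$ with $|x|$ large, where $|x|, |y|, |x-y|$ are all comparable to $|x|$ and the integrand is $\sim |x|^{-s-t-p}$ on a set of measure $\sim |x|^{2n}$, for $s + p + t > 2n$.

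The main obstacle is the careful asymptotic bookkeeping in region (c), where the behavior of the inner integral $\int \langle y \rangle^{-t} |x-y|^{-p}\,dy$ has a transition according to the sign of $t - n$, with possible logarithmic borderline corrections that must be absorbed by the strict inequalities. A cleaner unified approach is to recast the whole integral as a triple dyadic sum $I \sim \sum_{j,k,\ell \ge 0} 2^{-sj - tk - p\ell}\,\mu_{j,k,\ell}$, where $\mu_{j,k,\ell}$ is the measure of $\{|x| \sim 2^j,\, |y| \sim 2^k,\, |x - y| \sim 2^\ell\}$ subject to the triangle-inequality constraints tying $\ell$ to $j,k$; the four stated inequalities then correspond precisely to the four independent directions of potential divergence in the $(j,k,\ell)$ index set, rendering the equivalence transparent.
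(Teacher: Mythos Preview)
The paper does not supply its own proof of this lemma: it is quoted verbatim from the cited MathOverflow answer \cite{pe24}, so there is no in-paper argument to compare against. Your proposal is therefore being assessed on its own merits.

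Your overall strategy is sound and would yield a complete proof. The region decomposition (a)--(c3) captures all the relevant asymptotic regimes, and your identification of which condition governs each region is correct: $p<n$ from the near-diagonal singularity, $s+p>n$ and $t+p>n$ from the off-diagonal decay with one variable bounded, and $s+p+t>2n$ from the far-field region where all three quantities $|x|,|y|,|x-y|$ are large. The necessity witnesses you propose are also correct.

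A few points deserve more care than you give them. In region (b) as written, when $|x|\le 2$ and $|y|$ is only slightly larger than $2$, one does \emph{not} have $|x-y|\sim|y|$; you need to peel off a bounded shell (say $2\le|y|\le 4$) and absorb it into the region-(a) argument before asserting $|x-y|\sim|y|$ on the remainder. In region (c3) your dyadic decomposition in $|x-y|\sim 2^\ell$ with $\ell\ge 0$ omits the contribution from $|x-y|\le 1$; that piece requires $s+t>n$, which does follow from $s+p+t>2n$ and $p<n$, but you should say so. Finally, in (c1) the case $t=n$ produces a logarithm rather than a power; you allude to this as a ``borderline correction'' but should note explicitly that it is then the condition $s+p>n$ (not $s+p+t>2n$) that controls convergence.

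Your closing remark about the triple dyadic sum $\sum_{j,k,\ell}2^{-sj-tk-p\ell}\mu_{j,k,\ell}$ is the cleanest way to organize the whole calculation, and carrying it out fully (with the triangle-inequality constraint that $\ell\le\max(j,k)+O(1)$, and $\ell$ forced to equal $\max(j,k)+O(1)$ when $|j-k|$ is large) would subsume all the casework above.
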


\begin{lemma} \label{simple Sobolev est lem}
Suppose $T : L^2(\R^n) \to H^2(\R^n)$ is a bounded operator. For any $s > 0$, there exists $C > 0$ so that 
\begin{equation} \label{recast L2 to H2 bd}
\| \langle x \rangle^{-s} T \|_{L^2 \to H^2} \le C(\| \langle x \rangle^{-s} T \|_{L^2 \to L^2} + \| \langle x \rangle^{-s} \Delta T \|_{L^2 \to L^2}).
\end{equation}
\end{lemma}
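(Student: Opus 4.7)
The strategy is to use the standard elliptic regularity estimate $\|g\|_{H^2} \le C(\|g\|_{L^2} + \|\Delta g\|_{L^2})$ from \eqref{std elliptic thry}, applied to $g = \langle x \rangle^{-s} Tf$, and then to commute $\Delta$ past the weight. Since $\langle x \rangle^{-s}$ is smooth with $|\nabla \langle x \rangle^{-s}|$ and $|\Delta \langle x \rangle^{-s}|$ both bounded pointwise by $C_s \langle x \rangle^{-s}$, the commutator identity
\begin{equation*}
\Delta(\langle x \rangle^{-s} Tf) = \langle x \rangle^{-s} \Delta Tf + (\Delta \langle x \rangle^{-s})Tf + 2(\nabla \langle x \rangle^{-s}) \cdot \nabla Tf
\end{equation*}
gives two terms of the desired form plus the awkward mixed term $\langle x \rangle^{-s}\nabla Tf$, which must be controlled by $\langle x \rangle^{-s} Tf$ and $\langle x \rangle^{-s}\Delta Tf$ only.

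The plan for this last step is to apply the interpolation inequality (second line of \eqref{std elliptic thry}) directly to $g = \langle x \rangle^{-s} Tf$, which lies in $H^2(\R^n)$ since $T$ maps $L^2$ into $H^2$ and $\langle x \rangle^{-s}$ is a smooth bounded multiplier. This yields, for any $\gamma > 0$,
\begin{equation*}
\|\nabla(\langle x \rangle^{-s}Tf)\|_{L^2}^2 \le C\bigl(\gamma^{-1}\|\langle x \rangle^{-s}Tf\|_{L^2}^2 + \gamma\|\Delta(\langle x \rangle^{-s}Tf)\|_{L^2}^2\bigr).
\end{equation*}
Since $\langle x \rangle^{-s}\nabla Tf = \nabla(\langle x \rangle^{-s}Tf) - (\nabla \langle x \rangle^{-s})Tf$, the same bound governs $\|\langle x \rangle^{-s}\nabla Tf\|_{L^2}$ up to a multiple of $\|\langle x \rangle^{-s}Tf\|_{L^2}$.

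Combining, one obtains
\begin{equation*}
\|\Delta(\langle x \rangle^{-s}Tf)\|_{L^2} \le C_s\|\langle x \rangle^{-s}Tf\|_{L^2} + \|\langle x \rangle^{-s}\Delta Tf\|_{L^2} + C\gamma^{1/2}\|\Delta(\langle x \rangle^{-s}Tf)\|_{L^2} + C_\gamma\|\langle x \rangle^{-s}Tf\|_{L^2},
\end{equation*}
and choosing $\gamma$ small enough absorbs the offending term into the left-hand side. Plugging the result back into $\|\langle x \rangle^{-s}Tf\|_{H^2} \le C(\|\langle x \rangle^{-s}Tf\|_{L^2} + \|\Delta(\langle x \rangle^{-s}Tf)\|_{L^2})$ and taking the supremum over unit vectors $f \in L^2(\R^n)$ yields \eqref{recast L2 to H2 bd}.

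There is no serious obstacle here; the only point requiring mild care is ensuring $\langle x \rangle^{-s}Tf \in H^2$ so that the interpolation inequality is legitimate, and checking that all derivatives of $\langle x \rangle^{-s}$ relevant to the computation are dominated by $\langle x \rangle^{-s}$ itself so the resulting bounds have the form claimed.
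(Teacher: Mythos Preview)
Your proof is correct and follows essentially the same approach as the paper: apply the elliptic estimate \eqref{std elliptic thry} to $\langle x\rangle^{-s}Tf$, expand $\Delta(\langle x\rangle^{-s}Tf)$ via the commutator, use the interpolation inequality (second line of \eqref{std elliptic thry}) on $\langle x\rangle^{-s}Tf$ to control the first-order term, and absorb with a small $\gamma$. The only cosmetic difference is that the paper packages the commutator bound as $\|[\Delta,\langle x\rangle^{-s}]u\|_{L^2}\le C\|\langle x\rangle^{-s}u\|_{H^1}$ in one line, whereas you write out the three terms of the Leibniz expansion explicitly; the underlying computation is identical.
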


\begin{proof}
Let $f \in L^2(\R^n)$ and put $u = Tf$. By the first line of \eqref{std elliptic thry}, there exists $C > 0$, whose precise value may change from line to line, so that 
\begin{equation} \label{apply std elliptic thry}
\| \langle x \rangle^{-s} u \|_{H^2} \le C  \| \langle x \rangle^{-s} u \|_{L^2} + C\| \Delta \langle x \rangle^{-s} u \|_{L^2}, \qquad \tilde{u} \in H^2(\R^n).
\end{equation}
Then use the second line of \eqref{std elliptic thry},
\begin{equation*}
\begin{split}
\| \Delta \langle x \rangle^{-s} u \|_{L^2} &\le \|[\Delta, \langle x \rangle^{-s}] u \|_{L^2} + \|\langle x \rangle^{-s} \Delta  u \|_{L^2} \\
&\le C\|\langle x \rangle^{-s} u \|_{H^1} + \|\langle x \rangle^{-s} \Delta  u \|_{L^2} \\
&\le C \gamma^{-1} \| \langle x \rangle^{-s} u \|_{L^2} +  C\gamma \| \Delta \langle x \rangle^{-s} u \|_{L^2}) +\|\langle x \rangle^{-s} \Delta  u \|_{L^2}, \qquad \gamma > 0.
\end{split}
\end{equation*}
Fixing $\gamma$ small enough yields,
\begin{equation*}
\| \Delta \langle x \rangle^{-s} u \|_{L^2} \le C(\| \langle x \rangle^{-s} u \|_{L^2} +\|\langle x \rangle^{-s} \Delta  u \|_{L^2}),
\end{equation*}
which in combination with \eqref{apply std elliptic thry} implies \eqref{recast L2 to H2 bd}.\\
\end{proof}

\end{document}